\def\pdt2{\partial_t^2}
\def\pdx2{\partial_x^2}
\newcommand{\normmm}[1]{{\left\vert\kern-0.25ex\left\vert\kern-0.25ex\left\vert #1
    \right\vert\kern-0.25ex\right\vert\kern-0.25ex\right\vert}}
\newcommand{\abs}[1]{\left\vert#1\right\vert}
\def\RR{{\mathbb{R}}}
\def\ZZ{{\mathbb{Z}}}
\def\ii{\mathrm{i}}
\newtheorem{theo}{Theorem}[section]
\newtheorem{rem}[theo]{Remark}
\newtheorem{defi}[theo]{Definition}
\newtheorem{prop}[theo]{Proposition}
\def\no{\noindent}
\title{Long time conservations of two-step symmetric methods for charged-particle dynamics
in a  normal or strong magnetic field}
\author{Bin Wang\thanks{School of Mathematical
Sciences, Qufu Normal University, Qufu  273165, P.R.China;
Mathematisches Institut, University of T\"{u}bingen, Auf der
Morgenstelle 10, 72076 T\"{u}bingen, Germany. E-mail:~{\tt
wang@na.uni-tuebingen.de}} \and Xinyuan Wu\,\footnote{School of
Mathematical Sciences, Qufu Normal University, Qufu  273165,
P.R.China.  Department of Mathematics, Nanjing University, Nanjing
210093, P.R.China. E-mail:~{\tt xywu@nju.edu.cn}} \and Yajun
Wu\,\footnote{School of Mathematical Sciences, Qufu Normal
University, Qufu  273165, P.R.China. E-mail:~{\tt
1921170786@qq.com}} \and Ting Li\,\footnote{School of Mathematical
Sciences, Qufu Normal University, Qufu  273165, P.R.China.
E-mail:~{\tt 1009587520@qq.com}}   }
\begin{document}
\maketitle
\begin{abstract}
The study of the long time conservation for numerical methods poses
interesting and challenging questions from the
 point of view of geometric integration. In this paper, we analyze the
long time conservations of two-step symmetric methods for
charged-particle dynamics. Two two-step symmetric methods are
proposed and their long time behaviour is shown not only in a normal
magnetic field but also in a strong magnetic field. The approaches
to dealing with these two cases are based on the backward error
analysis and modulated Fourier expansion, respectively. It is
obtained from the analysis that the first method has an additional
superiority which the Boris method does not possess, and the second
method has better long time conservations than  the variational
method which was researched recently in the literature.
\medskip

\no{Keywords:}  charged-particle dynamics, two-step symmetric
methods, backward error analysis, modulated Fourier expansion

\medskip
\no{MSC (2000):} 65L05, 65P10, 78A35, 78M25
\end{abstract}

\section{Introduction}\label{intro}
The numerical investigation for charged-particle dynamics have
received much attention in the last few decades (see, e.g.
\cite{Boris1970,Hairer2017-1,Hairer2017-2,He2015,He2017,Tao2016}).
In this paper, we analyze the long time conservations of two
two-step symmetric methods   for solving charged-particle dynamics
of the form (see \cite{Hairer2018})
\begin{equation}\label{charged-particle sts}
\begin{array}[c]{ll}
\ddot{x}=\dot{x} \times  \frac{1}{\epsilon} B(x)+F(x), \quad
x(0)=x_0,\quad \dot{x}(0)=\dot{x}_0,\ \ t\in[0,T],
\end{array}
\end{equation}
where  $B(x) =  \nabla \times A(x)$ is a magnetic field with the
vector potential $A(x)\in \RR^3$, the position of a particle moving
in this field is denoted by $x(t)\in \RR^3$,  and $F(x) = -\nabla
U(x)$ is an electric field with the scalar potential $U(x)$.     The
energy of this dynamics
\begin{equation}\label{energy of cha}
E(x,v)=\frac{1}{2}\abs{v}^2+U(x)
\end{equation}
is preserved exactly along the solution $x$ and the velocity
$v=\dot{x}$ of the particle. It is assumed that the initial values
 are bounded as
\begin{equation}\label{ivb}x_0=\mathcal{O}(1),\ \
v_0:=\dot{x}_0=\mathcal{O}(1).
\end{equation}
In this work, we focus on the study of the following two regimes of
$\epsilon$:
\begin{itemize}
  \item one regime is that $\epsilon$ in \eqref{charged-particle sts} is assumed to be one which
  means that the magnetic field is ``normal";
    \item the other  regime is that $\epsilon$ in \eqref{charged-particle sts} is assumed to satisfy $0<\epsilon\ll 1$ which
  means that the magnetic field is ``strong".
\end{itemize}
For the normal magnetic field,
 if the scalar and vector potentials have the invariance properties
\begin{equation*}%\label{nvariance pro}
U(e^{\tau S}x)=U(x),\ \ \ e^{-\tau S} A(e^{\tau S}x)=A(x)
\end{equation*}
for all real $\tau$ with a skew-symmetric matrix $S$, then   the
momentum
\begin{equation}\label{momentum}
M(x,v)=(v+A(x))^{\intercal}Sx
\end{equation}
is conserved along solutions of \eqref{charged-particle sts}. For
the strong magnetic field, it has been shown in
 \cite{Hairer2018} that the magnetic moment
$$I(x,v)=\frac{1}{2}\frac{\abs{v_{\perp}}^2}{\abs{B(x) }}$$ is nearly
conserved along the solution over long time scales,  where
$v_{\perp}$ is orthogonal to $B(x)$ and is given by
$v_{\perp}=\frac{v \times B(x)}{\abs{B(x) }}$.

In order to effectively solve  charged-particle dynamics,  many
methods have been developed and studied in recent decades, such as
the Boris method (see, e.g. \cite{Boris1970,Hairer2017-1,Qin2013}),
symplectic or K-symplectic algorithms (see, e.g.
\cite{He2017,Tao2016,Webb2014,wang2018-new2,Zhang2016}), symmetric
multistep methods (see, e.g.
 \cite{Hairer2017-2}), volume-preserving
algorithms (see, e.g. \cite{He2015}), variational integrators in a
strong magnetic field  (see, e.g. \cite{Hairer2018}) and
  exponential integrators in a constant magnetic field (see, e.g. \cite{wang2018-new2}).

On the basis of these studies, this paper analyzes the long time
conservations of two two-step  symmetric methods for
charged-particle dynamics not only in a normal magnetic field but
also in a strong magnetic field. We will show the energy and
momentum conservations of the methods in a normal magnetic field and
derive the modified  energy and modified magnetic moment
preservations in a strong magnetic field. To this end, the backward
error analysis is employed for the first case, and the modulated
Fourier expansion is used for the strong magnetic field.

The  main  contributions of this paper are to show that the first
method has an additional superiority which the Boris method does not
have and that the second method has better long time conservations
than the variational method which was researched recently in
\cite{Hairer2018}.
 The paper is    organized as follows.
In Section \ref{sec:method}, we  formulate the schemes of methods
and discuss their basic properties.  Section \ref{sec:experiment}
gives the main results of this paper and carries out some
illustrative numerical experiments. Then the special result of the
first method is shown in Section \ref{sec:speci}, the results in a
normal magnetic field are proved in Section \ref{sec:normal}, and
the conservations in a strong magnetic field are shown  in Section
\ref{sec:strong}.  The last section is concerned with the
conclusions of this paper.

\section{Two methods and basic properties}\label{sec:method}
\subsection{The first method} Applying the implicit midpoint rule to the
charged-particle dynamics \eqref{charged-particle sts} and
approximating $x(t)$ as the linear interpolant of $x_n$ and
$x_{n+1}$ yields  the first method as follows.
\begin{defi}
\label{def: scheme 1}  The   first  method for solving  the
charged-particle dynamics \eqref{charged-particle sts}  is defined
by:
\begin{equation}\label{TSM1}
\left\{\begin{array}[c]{ll}x_{n+1}=x_{n}+
hv_{n+1/2},\\
v_{n+1}=v_{n}+h v_{n+1/2} \times \frac{1}{\epsilon}B(x_{n+1/2})+h
 F(x_{n+1/2}),
\end{array}\right.
\end{equation}
where $h$ is a stepsize, $v_{n+1/2}=\frac{v_{n}+v_{n+1}}{2}$ and
$x_{n+1/2}=\frac{x_{n}+x_{n+1}}{2}$. This method can be rewritten as
a two-step method
\begin{equation}\label{TSM1-twostep}
\begin{array}[c]{ll}x_{n+1}-2x_{n}+x_{n-1}=&\frac{h}{2}\big(x_{n+1}-x_{n}\big)
\times \frac{1}{\epsilon}B(x_{n+1/2})+
\frac{h}{2}\big(x_{n}-x_{n-1}\big) \times
\frac{1}{\epsilon}B(x_{n-1/2})\\
&+\frac{h^2}{2}\big(F(x_{n+1/2})+F(x_{n-1/2})\big).
\end{array}
\end{equation}
We denote this method by TSM1.
\end{defi}

\begin{prop}\label{vp thm}
Let $q=(x^{\intercal},v^{\intercal})^{\intercal}$ and
$G(q)=(v^{\intercal}, (v\times \frac{1}{\epsilon}
B(x)+F(x))^{\intercal})^{\intercal}$. The first method \eqref{TSM1}
is volume preserving for vector fields
$$\mathcal{D}=\{\textmd{vector fields satisfying}\  \det\big(I+\frac{h}{2}G'(q)\big)=\det\big(I-\frac{h}{2}G'(q)\big)\ \textmd{for all}\ h>0\
\textmd{and all}\ x\}.$$
\end{prop}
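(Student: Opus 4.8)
The plan is to recognise that TSM1 \eqref{TSM1} is exactly the implicit midpoint rule applied to the first-order system $\dot q = G(q)$, and then to differentiate the implicit relation defining the one-step map and take a determinant. First I would write the midpoint rule for $\dot q = G(q)$ as
\[
q_{n+1} = q_n + h\, G\big(\tfrac{q_n+q_{n+1}}{2}\big),
\]
and check that, splitting $q=(x^{\intercal},v^{\intercal})^{\intercal}$ into its position and velocity blocks and inserting $G(q)=(v^{\intercal},(v\times\tfrac1\epsilon B(x)+F(x))^{\intercal})^{\intercal}$, this is line by line the system \eqref{TSM1} with $x_{n+1/2}=(x_n+x_{n+1})/2$ and $v_{n+1/2}=(v_n+v_{n+1})/2$. (The two-step form \eqref{TSM1-twostep} is a reduced reformulation and is not what we differentiate; we work with the genuine one-step map $\Phi_h\colon(x_n,v_n)\mapsto(x_{n+1},v_{n+1})$ on the $6$-dimensional phase space $(x,v)$.) This identification is the only conceptual ingredient; the remainder is linear algebra.

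Next I would compute the Jacobian of $\Phi_h$. For the stepsizes under consideration the matrix $I-\tfrac h2 G'(q_{n+1/2})$ is invertible — this follows from the defining property of $\mathcal D$ together with $\det(I\pm 0)=1$ and continuity in $h$ — so $\Phi_h$ is well defined and smooth by the implicit function theorem. Differentiating $q_{n+1}-q_n = h\, G\big(\tfrac{q_n+q_{n+1}}{2}\big)$ with respect to $q_n$ yields
\[
\Big(I-\tfrac h2 G'(q_{n+1/2})\Big)\frac{\partial q_{n+1}}{\partial q_n} = I+\tfrac h2 G'(q_{n+1/2}),
\]
hence $\dfrac{\partial q_{n+1}}{\partial q_n} = \Big(I-\tfrac h2 G'(q_{n+1/2})\Big)^{-1}\Big(I+\tfrac h2 G'(q_{n+1/2})\Big)$.

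Finally, taking determinants gives
\[
\det\frac{\partial q_{n+1}}{\partial q_n} = \frac{\det\big(I+\tfrac h2 G'(q_{n+1/2})\big)}{\det\big(I-\tfrac h2 G'(q_{n+1/2})\big)},
\]
so $\Phi_h$ preserves phase-space volume precisely when this ratio equals $1$, i.e.\ when $\det\big(I+\tfrac h2 G'(q_{n+1/2})\big)=\det\big(I-\tfrac h2 G'(q_{n+1/2})\big)$ — which is the condition defining $\mathcal D$, holding at the intermediate argument $q_{n+1/2}$ and the given stepsize. Since a composition of volume-preserving maps is again volume preserving, the whole discrete flow generated by \eqref{TSM1} is volume preserving whenever the underlying vector field lies in $\mathcal D$, which is the assertion of the proposition.

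I do not expect a genuine obstacle here: the statement is essentially the classical fact that the implicit midpoint map has Jacobian equal to the Cayley-type transform $\big(I-\tfrac h2 G'\big)^{-1}\big(I+\tfrac h2 G'\big)$, specialised to vector fields for which this transform is unimodular. The only points meriting care are the faithful translation between \eqref{TSM1} and the abstract midpoint form, and the justification (noted above) that $\Phi_h$ is well defined so that the matrix inversion in the Jacobian formula is legitimate.
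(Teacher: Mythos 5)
Your proposal is correct and follows essentially the same route as the paper: identify \eqref{TSM1} with the implicit midpoint rule for $\dot q=G(q)$, differentiate the implicit relation $q_{n+1}=q_n+hG\big(\tfrac{q_n+q_{n+1}}{2}\big)$ to obtain the Jacobian $\big(I-\tfrac h2 G'\big)^{-1}\big(I+\tfrac h2 G'\big)$, and take determinants. Your added remarks on the invertibility of $I-\tfrac h2 G'$ and on composing volume-preserving steps are sensible refinements but do not change the argument.
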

\begin{proof}
Denote the first method \eqref{TSM1}  by a map $\varphi_h.$ Then we
have $\varphi_h(q)=q+hG\big(\frac{\varphi_h(q)+q}{2}\big),$ which
leads to
$\varphi'_h(q)=I+\frac{h}{2}G'\big(\frac{\varphi_h(q)+q}{2}\big)(I+\varphi'_h(q)).$
Therefore, one gets   the condition for volume preservation
$$\det(\varphi'_h(q))=\frac{\det\Big(I+\frac{h}{2}G'\big(\frac{\varphi_h(q)+q}{2}\big)\Big)}{\det\Big(I-\frac{h}{2}G'\big(\frac{\varphi_h(q)+q}{2}\big)\Big)}
=1.$$

\end{proof}
\subsection{The second method}
A related variational integrator, which coincides with the first
  method for constant $B$, is
constructed as follows. We first  approximate $x(t)$ as the linear
interpolant of $x_n$ and $x_{n+1}$ and  then approximate the
integral of the Lagrangian $L(x(t), \dot{x}(t))$  by   the midpoint
rule (see, e.g., Example 6.3, Chap. VI
  of \cite{Hairer2006} and \cite{Webb2014}). This gives the following
variational integrator.

\begin{defi}
\label{def: scheme 2}  The second method  for solving  the
charged-particle dynamics \eqref{charged-particle sts} is defined by
a two-step method
\begin{equation}\label{TSM2}
\begin{array}[c]{ll}x_{n+1}-2x_{n}&+x_{n-1}
=\frac{h}{2}\frac{1}{\epsilon}A'^{\intercal}(x_{n+1/2})\big(x_{n+1}-x_{n}\big)
+ \frac{h}{2}\frac{1}{\epsilon}A'^{\intercal}(x_{n-1/2})
\big(x_{n}-x_{n-1}\big)  \\
&-h\frac{1}{\epsilon}\big(A(x_{n+1/2})-A(x_{n-1/2})\big)+\frac{h^2}{2}\big(F(x_{n+1/2})+F(x_{n-1/2})\big),
\end{array}
\end{equation}
and $v_{n+1}=2\frac{x_{n+1}-x_{n}}{h}-v_{n}.$ We denote this method
by TSM2.
\end{defi}

\begin{rem}
It is clear that these two methods are symmetric methods of order
two. From the fact that the second method \eqref{TSM2} is a
variational integrator, it follows that for \eqref{TSM2},  with the
momenta $p_n = v_n + A(x_n )$, the map $(x_n ,p_n
)\rightarrow(x_{n+1},p_{n+1} )$ is symplectic. It is noted that when
$B(x)$ is a constant magnetic field $B$ (which means that
$A(x)=-\frac{1}{2}x  \times B$), the method TSM2 becomes TSM1, and
has the following scheme
\begin{equation*}%\label{TSM1-twostep-cons}
\begin{array}[c]{ll}x_{n+1}-2x_{n}+x_{n-1}=&\frac{h}{2}\big(x_{n+1}-x_{n-1}\big)
\times \frac{1}{\epsilon}B
 +\frac{h^2}{2}\Big(F\big(x_{n+1/2}\big)+F\big(x_{n-1/2}\big)\Big).
\end{array}
\end{equation*}
 Thus the first method \eqref{TSM1} is also  symplectic once the magnetic field $B$ is constant.
\end{rem}

\section{Main results and numerical experiment}\label{sec:experiment}
\subsection{The energy preservation of TSM1 for quadratic scalar
potentials} For some special scalar potentials, the first method
TSM1 has an exact energy preservation stated below.
\begin{theo}\label{energy pre thm}
(\textbf{Energy conservation of TSM1 for quadratic scalar
potentials.}) If the scalar potential is quadratic,
$U(x)=\frac{1}{2}x^{\intercal} Q x+q^{\intercal}x$ with a symmetric
matrix $Q$, then  the first method \eqref{TSM1} preserves the energy
$E$ in \eqref{energy of cha} exactly for a normal or strong magnetic
field, i.e.,
$$E(x_{n+1},v_{n+1})=E(x_{n},v_{n})\qquad \textmd{for} \qquad n=0,1,\ldots.$$
\end{theo}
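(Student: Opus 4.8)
The plan is to verify the energy identity directly from the one-step form \eqref{TSM1}, exploiting that for a quadratic potential the electric field $F(x)=-\nabla U(x)=-(Qx+q)$ is affine, so that the midpoint value satisfies $F(x_{n+1/2})=\tfrac12\big(F(x_n)+F(x_{n+1})\big)$. First I would compute $E(x_{n+1},v_{n+1})-E(x_n,v_n)=\tfrac12\big(\abs{v_{n+1}}^2-\abs{v_n}^2\big)+U(x_{n+1})-U(x_n)$. For the kinetic part, write $\abs{v_{n+1}}^2-\abs{v_n}^2=(v_{n+1}-v_n)^{\intercal}(v_{n+1}+v_n)=2h\,v_{n+1/2}^{\intercal}(v_{n+1}-v_n)$, and then substitute the second line of \eqref{TSM1} for $v_{n+1}-v_n$. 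The magnetic contribution $h\,v_{n+1/2}^{\intercal}\big(v_{n+1/2}\times\tfrac1\epsilon B(x_{n+1/2})\big)$ vanishes because the cross product is orthogonal to $v_{n+1/2}$; this is the step that makes the argument work for both the normal and the strong regime, since $\epsilon$ never enters. What remains from the kinetic side is $h^2\,v_{n+1/2}^{\intercal}F(x_{n+1/2})=h\,(x_{n+1}-x_n)^{\intercal}F(x_{n+1/2})$, using $x_{n+1}-x_n=h v_{n+1/2}$ from the first line of \eqref{TSM1}.

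For the potential part, the quadratic form gives the exact identity $U(x_{n+1})-U(x_n)=(x_{n+1}-x_n)^{\intercal}\big(Q x_{n+1/2}+q\big)=-(x_{n+1}-x_n)^{\intercal}F(x_{n+1/2})$, which is the standard telescoping property of a symmetric bilinear form evaluated at the midpoint. Adding the two contributions, the terms $h\,(x_{n+1}-x_n)^{\intercal}F(x_{n+1/2})$ cancel exactly, giving $E(x_{n+1},v_{n+1})=E(x_n,v_n)$, and induction on $n$ finishes the claim.

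There is essentially no serious obstacle here: the argument is a short computation, and the only two facts one must be careful to state correctly are (i) the orthogonality $v_{n+1/2}\perp\big(v_{n+1/2}\times B(x_{n+1/2})\big)$, which kills the magnetic term regardless of the size of $\epsilon$ or of whether $B$ is constant, and (ii) the midpoint identity $U(b)-U(a)=(b-a)^{\intercal}\nabla U\big(\tfrac{a+b}{2}\big)$, which is exact precisely because $U$ is quadratic — this is where the hypothesis is used, and it is also the only place it is used. It may be worth remarking explicitly that the linear-interpolant/midpoint structure of TSM1 is exactly what matches the discrete energy increment to this telescoping identity, so the quadratic restriction on $U$ cannot be dropped in general.
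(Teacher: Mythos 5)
Your argument is correct and complete, but it takes a genuinely different (and more self-contained) route than the paper. The paper does not compute the energy increment at all: it observes that for affine $F$ the midpoint value coincides with the line-integral average, $F(x_{n+1/2})=\int_0^1 F\big(x_n+\sigma(x_{n+1}-x_n)\big)\,d\sigma$, so that \eqref{TSM1} becomes the scheme \eqref{M1 NEW SCHEME}, which is then identified with the energy-preserving integrator \eqref{EP M} of Cohen and Hairer \cite{Cohen2011} for the Poisson formulation of \eqref{charged-particle sts}; exact conservation is then quoted from that reference. You instead verify the discrete energy balance directly: the cross-product term is annihilated by $v_{n+1/2}^{\intercal}\big(v_{n+1/2}\times B\big)=0$ (uniformly in $\epsilon$, which is exactly why the statement covers both regimes), and the quadratic hypothesis enters only through the exact midpoint identity $U(b)-U(a)=(b-a)^{\intercal}\nabla U\big(\tfrac{a+b}{2}\big)$. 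These are the same underlying mechanisms (your midpoint identity is precisely the discrete-gradient property that the paper encodes via the integral average), but your version buys a two-line elementary proof with no external citation, while the paper's version buys the structural insight that TSM1 sits inside a known class of energy-preserving Poisson integrators. One bookkeeping slip to fix: in $\abs{v_{n+1}}^2-\abs{v_n}^2=(v_{n+1}-v_n)^{\intercal}(v_{n+1}+v_n)$ the right-hand side equals $2\,v_{n+1/2}^{\intercal}(v_{n+1}-v_n)$ with no factor $h$; consequently the surviving kinetic contribution is $h\,v_{n+1/2}^{\intercal}F(x_{n+1/2})=(x_{n+1}-x_n)^{\intercal}F(x_{n+1/2})$, not $h\,(x_{n+1}-x_n)^{\intercal}F(x_{n+1/2})$. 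With that spurious $h$ removed, the cancellation against $U(x_{n+1})-U(x_n)=-(x_{n+1}-x_n)^{\intercal}F(x_{n+1/2})$ is exact, as you intend.
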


\subsection{Results in a normal magnetic field}
The results of this subsection are given for the methods applied to
charged-particle dynamics in a  normal   magnetic field.
\subsubsection{Results of TSM1}
\begin{theo}\label{nearly energy pre thm0}
(\textbf{Energy conservation of TSM1.}) If the numerical solution
stays in a compact set that is independent of $h$ and $B(x) = B$,
then the first method \eqref{TSM1} has a long time energy
conservation
$$\abs{E(x_{n+1/2},v_{n+1/2})-E(x_{1/2},v_{1/2})}\leq Ch^2\ \ \ \textmd{for}\ \  nh\leq h^{-N},$$
where $N\geq 2$ is an arbitrary truncation number and  the constant
  $C$ is independent of  $n$ and $h$ as long as
$nh\leq h^{-N}.$
\end{theo}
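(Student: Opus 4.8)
The plan is to use backward error analysis, the standard tool for proving long-time energy conservation of symmetric one-step methods applied to (in this constant-field case) a reversible system. First I would observe that when $B(x)=B$ is constant, the method TSM1 \eqref{TSM1} is a symmetric one-step map $\varphi_h$ in the variables $(x,v)$ — indeed, as noted in the Remark, it is even symplectic in the pair $(x,p)$ with $p=v+A(x)$ — and, since the scalar potential $U$ need not be quadratic here, it does not preserve $E$ exactly, only up to $\mathcal{O}(h^2)$ per step. The strategy is then: (i) construct a modified differential equation $\dot y = f_h(y) = f(y) + h f_2(y) + h^2 f_3(y) + \cdots$ whose time-$h$ flow formally matches $\varphi_h$; (ii) exploit the symmetry of the method to conclude that only even powers of $h$ appear, so that $f_h = f + h^2 f_3 + \cdots$; (iii) use the symplecticity (in the $(x,p)$ coordinates) to conclude that the truncated modified vector field is Hamiltonian with a modified Hamiltonian (equivalently, modified energy) $\tilde E_h = E + h^2 E_3 + \cdots$; (iv) truncate the modified equation after $N$ terms, so that the numerical map agrees with the time-$h$ flow of the truncated modified system up to an exponentially small defect $\mathcal{O}(h\, e^{-c/h})$ per step (or, if one only wants a polynomially long interval $nh \le h^{-N}$, up to $\mathcal{O}(h^{N+1})$ per step suffices); and (v) telescope: along the true numerical trajectory $\tilde E_h$ drifts by at most $\mathcal{O}(h^{N+1})$ per step, hence by $\mathcal{O}(n h^{N+1}) = \mathcal{O}(h)$ over $n$ steps with $nh \le h^{-N}$, and since $\tilde E_h = E + \mathcal{O}(h^2)$ this gives $|E(x_n,v_n) - E(x_0,v_0)| \le C h^2$ on that interval.

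To get the statement precisely as written — with the energy evaluated at the half-points $(x_{n+1/2}, v_{n+1/2})$ rather than at $(x_n,v_n)$ — I would note that the midpoint formulation \eqref{TSM1} makes the half-step quantities $x_{n+1/2} = \tfrac12(x_n+x_{n+1})$, $v_{n+1/2} = \tfrac12(v_n+v_{n+1})$ the natural "internal stage" values, and there is a standard reformulation (the "midpoint" or "Störmer–Verlet-type" shift) under which the map $(x_{n-1/2}, v_{n-1/2}) \mapsto (x_{n+1/2}, v_{n+1/2})$ is itself a symmetric (and symplectic, in suitable variables) one-step method conjugate to $\varphi_h$. Running the backward error analysis for that conjugate map produces a modified energy $\bar E_h = E + \mathcal{O}(h^2)$ that is conserved up to $\mathcal{O}(h^{N+1})$ per step along the half-point sequence, and telescoping over $nh \le h^{-N}$ again yields the $Ch^2$ bound. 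The hypothesis that the numerical solution stays in a compact set independent of $h$ is exactly what is needed to make all the vector fields $f_j$, the modified energies $E_j$, and their derivatives uniformly bounded on the relevant region, so that the constants in the truncation estimates and in $\bar E_h - E = \mathcal{O}(h^2)$ are uniform.

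The main obstacle, and the part requiring genuine care rather than citation, is justifying that the modified vector field is Hamiltonian in the $(x,v)$-picture we actually want to track energy in. The symplecticity granted by the Remark is for the magnetic-momentum pair $(x,p)$, $p = v + A(x)$; the energy $E(x,v) = \tfrac12|v|^2 + U(x)$ is the Hamiltonian expressed in $(x,v)$ via the (non-canonical, magnetic) symplectic form $dx\wedge dp = dx\wedge dv + dx\wedge (A'(x)\,dx)$. So I would run the backward error analysis in the canonical $(x,p)$ coordinates — where the classical theorem (e.g. Hairer–Lubich–Wanner, Chap. IX) applies verbatim to give a modified Hamiltonian $\tilde H_h$ — and then transport back: since the change of variables $(x,v)\leftrightarrow(x,p)$ is a fixed $h$-independent diffeomorphism (smooth because $A$ is smooth and we work on a compact set), $\tilde H_h$ expressed in $(x,v)$ is precisely the modified energy, and it equals $E + \mathcal{O}(h^2)$ because the method is consistent and second-order. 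A secondary technical point is the constant magnetic field assumption $B(x)=B$: this is what makes $A(x) = -\tfrac12 x\times B$ affine, hence $p = v + A(x)$ an affine (and globally invertible) change of coordinates, and it is what reduces TSM2 to TSM1 and guarantees the symplecticity invoked above — for non-constant $B$ the one-step map $\varphi_h$ of TSM1 is only volume-preserving (Proposition \ref{vp thm}), not symplectic, which is why this theorem is stated with $B(x)=B$. Everything else — the existence and formal-series construction of $f_h$, the even-powers-of-$h$ consequence of symmetry, the exponentially-small or $\mathcal{O}(h^{N+1})$ truncation estimate, and the telescoping — is routine once the compactness hypothesis is in force.
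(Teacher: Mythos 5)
Your proposal is sound and would prove the theorem, but it takes a genuinely different route from the paper's. The paper never invokes symplecticity: it performs the backward error analysis directly on the difference-operator form of \eqref{TSM1}, introducing $z(t)=\tfrac{1}{2}\big(y(t+h)+y(t)\big)$ so that $z(t_n)=x_{n+1/2}$ (which is why the statement is naturally phrased at half-points), derives the modified second-order equation \eqref{TSM1-equation3}, multiplies it by $\dot z^{\intercal}$, and assembles the modified energy by recognizing total differentials term by term --- using that $\dot z^{\intercal}z^{(k)}$ is a total differential for even $k$ and that, precisely because $B$ is constant, $\dot z^{\intercal}\big(z^{(k)}\times B\big)$ is one for odd $k$. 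Your argument instead passes to the variables $(x,p)$ with $p=v+A(x)$ (affine since $B$ is constant), uses the symplecticity of TSM1 in that chart (the Remark following Definition \ref{def: scheme 2}, via coincidence with the variational integrator TSM2) together with the Benettin--Giorgilli existence of a modified Hamiltonian, and transports back; the half-point statement is then recovered by conjugation or by the uniform bound $E(x_{n+1/2},v_{n+1/2})-E(x_n,v_n)=\mathcal{O}(h^2)$. Both mechanisms are valid here; the paper's has the advantage of being the same machinery it reuses for TSM2 with variable $B$ and for the momentum, whereas yours leans on a structural property (symplecticity) that TSM1 loses for non-constant $B$, by Proposition \ref{vp thm} --- a limitation you correctly identify. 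One bookkeeping remark: your telescoping estimate $nh^{N+1}=\mathcal{O}(h)$ for $nh\le h^{-N}$ in fact yields only $\mathcal{O}(1)$; to reach the stated $\mathcal{O}(h^2)$ one must truncate the modified series two orders deeper (replace $N$ by $N+2$, which is harmless since $N$ is arbitrary). The paper's appeal to ``a standard argument'' glosses over the identical point, so this is a shared imprecision rather than a gap specific to your approach.
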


\begin{theo}\label{nearly mom pre thm0}
(\textbf{Momentum conservation of TSM1.}) Under the condition of
Theorem \ref{nearly energy pre thm0},  the first method \eqref{TSM1}
has a long time momentum conservation
$$\abs{M(x_{n+1/2},v_{n+1/2})-M(x_{1/2},v_{1/2})}\leq C_2h^2\ \ \ \textmd{for}\ \  nh\leq h^{-N},$$
where   $C_2$ is a generic constant which has the same meaning as
that of Theorem \ref{nearly energy pre thm0}.
\end{theo}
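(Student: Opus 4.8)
The plan is to carry out backward error analysis in exactly the framework used for the proof of Theorem~\ref{nearly energy pre thm0}, and then to exploit the extra rotational symmetry so that $M$ becomes a Noether invariant of the \emph{modified} system. In the regime of this section $\epsilon=1$, and since here $B(x)=B$ is constant, TSM1 coincides with the variational integrator TSM2 and is therefore symplectic in the variables $(x_n,p_n)$ with $p_n=v_n+A(x_n)$, the associated Hamiltonian being $H(x,p)=\tfrac12\abs{p-A(x)}^2+U(x)=E\big(x,p-A(x)\big)$. Consequently, for every truncation index $N'\in\mathbb{N}$ its one-step map $\Phi_h$ admits a modified Hamiltonian $\widetilde H_h=H+h^2H_2+h^4H_4+\cdots$ — only even powers of $h$ appear because $\Phi_h$ is symmetric — such that the time-$h$ flow $\widetilde\varphi_h$ of the $h^{N'}$-truncated modified vector field satisfies $\Phi_h(z)=\widetilde\varphi_h(z)+\mathcal{O}(h^{N'+1})$ uniformly for $z$ in a fixed compact neighbourhood of the ($h$-independent, by hypothesis compact) set in which the numerical solution stays; here $F=-\nabla U$ and $A$ are taken sufficiently smooth.

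The decisive step is to show that the modified Hamiltonian retains the invariance $\widetilde H_h\circ\rho_\tau=\widetilde H_h$ under the cotangent-lifted one-parameter group $\rho_\tau(x,p)=(e^{\tau S}x,\,e^{\tau S}p)$. One first checks that $\Phi_h$ is $\rho_\tau$-equivariant, $\Phi_h\circ\rho_\tau=\rho_\tau\circ\Phi_h$: the relations \eqref{TSM1} are assembled only from the averaging operations $x_{n+1/2},v_{n+1/2}$ (which commute with the linear map $e^{\tau S}$), from $F$, and from the cross product with $B$; the invariance hypotheses on $U$ and $A$ give $F(e^{\tau S}x)=e^{\tau S}F(x)$ and, for the constant field, $e^{\tau S}B=B$, while $\det e^{\tau S}=1$ yields $(e^{\tau S}u)\times(e^{\tau S}w)=e^{\tau S}(u\times w)$, whence equivariance. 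Since the truncated modified vector field is uniquely determined order by order by $\Phi_h$, transporting it by $\rho_\tau$ leaves it unchanged, so it is $\rho_\tau$-equivariant; as it is Hamiltonian and $\rho_\tau$ is linear symplectic, $\widetilde H_h\circ\rho_\tau-\widetilde H_h$ is constant and may be normalised to $0$ (compare the analogous reasoning in \cite{Hairer2006}). Noether's theorem for the $\rho_\tau$-action, whose momentum map is precisely $M(x,p)=p^{\intercal}Sx=(v+A(x))^{\intercal}Sx$, then gives $\{M,\widetilde H_h\}=0$, i.e., $M$ is \emph{exactly} conserved along $\widetilde\varphi_t$.

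It remains to put the estimate together. Since $M$ is smooth, hence Lipschitz on the relevant compact set, and $M\circ\widetilde\varphi_h=M$, the defect bound above gives the per-step estimate $\abs{M(\Phi_h(z))-M(z)}=\abs{M(\widetilde\varphi_h(z))-M(z)}+\mathcal{O}(h^{N'+1})=\mathcal{O}(h^{N'+1})$. Telescoping over $n$ steps and using $n\le h^{-N-1}$ (which follows from $nh\le h^{-N}$) yields $\abs{M(x_n,p_n)-M(x_0,p_0)}\le C\,h^{N'+1}h^{-N-1}=C\,h^{N'-N}$; choosing $N'=N+2$ produces the claimed bound $C_2h^2$, with $C_2$ depending on $N$, on $F,B$ and on the compact set but not on $n$ or $h$ as long as $nh\le h^{-N}$. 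Finally, the passage from the integer-index quantities $(x_n,p_n)$ to the half-index values $(x_{n+1/2},v_{n+1/2})$ appearing in the statement — and from $p$ to $v=p-A(x)$ — is handled exactly as in the proof of Theorem~\ref{nearly energy pre thm0}: one applies the whole argument to the one-step map of TSM1 written in the midpoint variables, which is conjugate to $\Phi_h$ through an $h$-dependent near-identity diffeomorphism and therefore shares its symmetry, its symplecticity and its $\rho_\tau$-equivariance. The genuine obstacle in this programme is the content of the second paragraph — ensuring that the construction underlying Theorem~\ref{nearly energy pre thm0} can be done so that the modified Hamiltonian \emph{keeps} the exact rotational invariance, and that this invariance survives the reformulation in the midpoint variables; once that is in place, the rest is a routine adaptation of standard backward error analysis.
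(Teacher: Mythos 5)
Your argument reaches the correct conclusion but by a genuinely different route from the paper. The paper never leaves the second--order backward error analysis already set up for Theorem~\ref{nearly energy pre thm0}: it multiplies the modified equation \eqref{TSM1-equation3} by $z^{\intercal}S$, uses $z^{\intercal}S(\dot z\times B(z))=-\tfrac{d}{dt}\big(z^{\intercal}SA(z)\big)$, $z^{\intercal}S\nabla U(z)=0$ and the total--differential property of $z^{\intercal}Sz^{(k)}$ for even $k$, and then, for constant $B$, the identity $Sz=z\times B$ together with the fact that $(z\times B)^{\intercal}(z^{(k)}\times B)$ is a total differential for odd $k$, to produce a modified momentum $M_h$ with $\tfrac{d}{dt}M_h=\mathcal{O}(h^N)$; since the interpolant satisfies $z(t_n)=x_{n+1/2}$, the half--integer points come for free. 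You instead use symplecticity of the one--step map in $(x,p)$ (via the coincidence of TSM1 with TSM2 for constant $B$), $\rho_\tau$--equivariance, and Noether's theorem for the modified Hamiltonian. This is structurally more conceptual and makes transparent why constancy of $B$ matters. It can even be short--circuited: for constant $B$ the potential $A$ is linear, so $M(x,v)=(v+A(x))^{\intercal}Sx$ is a \emph{quadratic} first integral of \eqref{charged-particle sts}, and the implicit midpoint rule \eqref{TSM1} preserves quadratic first integrals exactly; hence $M(x_n,v_n)$ is exactly constant and no backward error analysis is needed at integer points.

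Two steps of your write--up need repair. First, ``$\widetilde H_h\circ\rho_\tau-\widetilde H_h$ is constant and may be normalised to $0$'' is not automatic: the constant may depend on $\tau$, and a nonvanishing $\tfrac{d}{d\tau}\big|_{\tau=0}$ translates into a $z$--independent, hence linearly accumulating, drift $\tfrac{d}{dt}M=\{M,\widetilde H_h\}=\mathrm{const}$ along the modified flow, which would destroy the long--time bound. You must show this constant is zero, e.g.\ by observing that each coefficient $H_{2j}$ is built equivariantly from derivatives of $H$, or by evaluating the constant $\{M,\widetilde H_h\}$ at a point where $\nabla M$ vanishes. Second, the map $(x_n,v_n)\mapsto(x_{n+1/2},v_{n+1/2})$ is the backward--Euler half--step of \eqref{TSM1}, which is \emph{not} symplectic, so the midpoint--variable one--step map is only conjugate--symplectic and your claim that it ``shares its symplecticity'' is false as stated. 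This last point is harmless once replaced by the direct estimate $M(x_{n+1/2},v_{n+1/2})-M(x_n,v_n)=\tfrac{h}{2}\,\dot M+\mathcal{O}(h^2)=\mathcal{O}(h^2)$ uniformly on the compact set, the $\mathcal{O}(h)$ term being the Lie derivative of $M$ along the exact vector field, which vanishes because $M$ is a first integral.
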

\begin{rem}
It follows from these two results  that the method TSM1 has similar
long time conservations of energy and momentum as Boris method whose
energy behaviour was researched in \cite{Hairer2017-1}. However, as
shown in Theorem \ref{energy pre thm}, if the scalar potential is
quadratic, TSM1 can preserve the energy exactly. This is an
additional superiority that the Boris method does not have.
\end{rem}

\subsubsection{Results of TSM2}
\begin{theo}\label{nearly energy pre thm02}
(\textbf{Energy conservation of TSM2.}) Assume that the numerical
solution stays in a compact set that is independent of $h$, then the
second method \eqref{TSM2} has a long time energy conservation
$$\abs{E(x_{n+1/2},v_{n+1/2})-E(x_{1/2},v_{1/2})}\leq Ch^2\ \ \ \textmd{for}\ \  nh\leq h^{-N},$$
where $N\geq 2$ is an arbitrary truncation number and  the constant
  $C$ is independent of  $n$ and $h$ as long as $nh\leq
h^{-N}.$
\end{theo}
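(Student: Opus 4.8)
The plan is to follow the backward error analysis route, exactly as one would for TSM1 in Theorem \ref{nearly energy pre thm0}, but now carrying the vector potential $A(x)$ through the computation. First I would rewrite the two-step scheme \eqref{TSM2} as a one-step map in the variables $(x_n, v_n)$ — or more conveniently in the symplectic variables $(x_n, p_n)$ with $p_n = v_n + A(x_n)$, which the remark already tells us makes the map symplectic. The starting point is that TSM2 is a symmetric, symplectic method of order two applied to a Hamiltonian system (the charged-particle Lagrangian system with Hamiltonian $H(x,p) = \tfrac12|p - A(x)|^2 + U(x)$, whose value equals the energy $E$ in \eqref{energy of cha}). For such methods the standard backward error analysis (see Hairer--Lubich--Wanner, Chap. IX) produces a modified Hamiltonian $\widetilde H = H + h^2 H_2 + h^4 H_4 + \cdots$, which, truncated at an optimal index $N$, is conserved by the numerical flow up to an exponentially small defect; on the polynomially long interval $nh \le h^{-N}$ that defect accumulates to at most $O(h^{N})$, hence certainly $O(h^2)$. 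Because the method is symmetric, only even powers of $h$ occur, so the leading correction is $O(h^2)$.

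The key steps in order: (i) verify that \eqref{TSM2} really is the symplectic midpoint-type variational integrator for the discrete Lagrangian obtained by linear interpolation plus midpoint quadrature, so that the $(x_n,p_n)$ map is exactly symplectic and symmetric — this is essentially quoted from \cite{Hairer2006}, Example 6.3, Chap. VI, and \cite{Webb2014}; (ii) invoke that the numerical solution remains in a compact set independent of $h$ (this is the hypothesis of the theorem), so that $A$, $U$, and all their derivatives are bounded there and the modified-equation series has coefficients bounded on a common neighbourhood; (iii) construct the truncated modified Hamiltonian $\widetilde H_N$ and show $\widetilde H_N(x_{n+1},p_{n+1}) = \widetilde H_N(x_n,p_n) + O(h^{N+1})$ per step; (iv) sum over $n \le h^{-N-1}$ steps to get $\widetilde H_N(x_n,p_n) = \widetilde H_N(x_0,p_0) + O(h^{-1})\cdot O(h^{N+1})$, wait — more carefully, over $nh \le h^{-N}$ i.e. $n \le h^{-N-1}$ steps the accumulated error is $O(h^{N+1}) \cdot h^{-N-1} = O(1)$; the right bookkeeping (choosing the truncation index $N$ large, or rather choosing it optimally as $N \sim c/h$ to get exponential smallness) gives an $O(h^{N_0})$ bound with $N_0$ as large as we like, in particular $O(h^2)$; (v) translate back: since $\widetilde H_N = H + O(h^2)$ uniformly on the compact set and $H$ evaluated at $(x,p)$ equals $E(x, p - A(x)) = E(x,v)$, conclude $|E(x_{n+1/2}, v_{n+1/2}) - E(x_{1/2}, v_{1/2})| \le Ch^2$. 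One small technical point is that the theorem is stated for the staggered quantities $(x_{n+1/2}, v_{n+1/2})$ rather than $(x_n, v_n)$; I would handle this by noting $x_{n+1/2} = \tfrac12(x_n + x_{n+1})$ and $v_{n+1/2} = \tfrac{x_{n+1}-x_n}{h}$ are smooth $O(h)$-close relabellings of the one-step state, so a conservation estimate in one set of variables transfers to the other with the same order.

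The main obstacle I anticipate is not the backward-error machinery itself — that is standard for symmetric symplectic integrators — but rather the verification that the modified-equation expansion is genuinely valid here in a form robust enough to allow $N$ to be taken arbitrarily large (equivalently, to extract $O(h^2)$ over the interval $nh \le h^{-N}$ for each fixed $N$). This requires the analyticity/boundedness of $A$ and $U$ on a complex neighbourhood of the compact set in which the numerical trajectory lies, together with Cauchy-estimate control of the growth of the coefficients $H_{2j}$; if $F = -\nabla U$ and $B = \nabla\times A$ are only assumed smooth and not analytic, one instead works with a fixed finite truncation and proves the per-step defect is $O(h^{N+1})$ by Taylor expansion, which already suffices for the stated $O(h^2)$ bound on $nh \le h^{-N}$. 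The other place needing care is confirming that the modified Hamiltonian inherits the right structure so that the argument does not secretly require $\epsilon = O(1)$ — but since this theorem concerns the normal field and the method is just a symplectic integrator of a smooth Hamiltonian system, no $\epsilon$-uniformity is claimed or needed, and the constant $C$ is simply allowed to depend on $\epsilon$ and on the compact set.
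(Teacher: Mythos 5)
Your argument is sound, but it follows a genuinely different route from the paper's. The paper never invokes the symplecticity of the $(x_n,p_n)$ map in this proof: it performs the backward error analysis directly on the two-step recursion, passing to the midpoint sequence $z(t)=L_2(e^{hD})y(t)$, writing the modified equation \eqref{TSM1-equation32} in terms of the operators $L_1,L_2$, multiplying by $\dot z^{\intercal}$, and checking term by term that the right-hand side is a total derivative up to $\mathcal{O}(h^N)$ --- the key ingredient being the identity \eqref{fact in Hairer} from \cite{Hairer2017-2}, applied with $f(hD)=L_1L_2^{-1}(e^{hD})/(hD)$, $u=DA(z)$, $v=Dz$, to absorb the vector-potential terms. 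This yields a modified energy $E_h(z,\dot z)$ already expressed in the staggered variables, so the bound on $E(x_{n+1/2},v_{n+1/2})$ requires no transfer step, and the same machinery is reused for the momentum (Theorem \ref{nearly mom pre thm02}) and for the strong-field analysis. Your route --- modified Hamiltonian for the symplectic one-step map in $(x,p)$ --- is a legitimate and conceptually cleaner alternative (and would give exponentially long times for analytic data), but two points need care. First, in your step (iv) the truncation index of the modified Hamiltonian must be taken strictly larger than the exponent $N$ of the time horizon (say $N+2$) to obtain $\mathcal{O}(h^2)$ rather than $\mathcal{O}(1)$; you catch this, but it should be stated cleanly. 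Second, the transfer to $(x_{n+1/2},v_{n+1/2})$ is not automatic for an arbitrary $\mathcal{O}(h)$-close relabelling: a generic smooth change of the evaluation point would degrade the conservation to $\mathcal{O}(h)$. It works here only because the first-order term in $E(x_{n+1/2},v_{n+1/2})-E(x_n,v_n)$, namely $\tfrac{h}{2}\bigl(v_n^{\intercal}(v_n\times \tfrac{1}{\epsilon}B(x_n)+F(x_n))+\nabla U(x_n)^{\intercal}v_n\bigr)$, vanishes identically because $E$ is a first integral of the flow; this cancellation should be made explicit rather than subsumed under ``same order.''
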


\begin{theo}\label{nearly mom pre thm02}
(\textbf{Momentum conservation of TSM2.}) Under the condition of
Theorem \ref{nearly energy pre thm02},  the following momentum
conservation is true for the second integrator \eqref{TSM2}
$$\abs{M(x_{n+1/2},v_{n+1/2})-M(x_{1/2},v_{1/2})}\leq Ch^2\ \ \ \textmd{for}\ \  nh\leq h^{-N}.$$
\end{theo}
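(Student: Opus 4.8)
The plan is to repeat, for TSM2, the backward-error argument behind Theorem~\ref{nearly mom pre thm0}, but now without the restriction to a constant field, exploiting that \eqref{TSM2} is a variational integrator. By the Remark, in the momentum variables $p_n=v_n+A(x_n)$ the one-step map $\psi_h\colon(x_n,p_n)\mapsto(x_{n+1},p_{n+1})$ of TSM2 is symplectic, and this holds for \emph{every} (normal) magnetic field. Reusing the backward-error setup from the proof of Theorem~\ref{nearly energy pre thm02}, on the fixed compact set carrying the numerical solution $\psi_h$ admits, for every truncation index $\bar N$, a modified Hamiltonian $\widetilde H_{\bar N}=H+h^2H_2+\cdots+h^{\bar N}H_{\bar N}$ (only even powers, by symmetry), with $H$ the charged-particle Hamiltonian, whose time-$h$ flow $\widetilde\psi_h^{\,\bar N}$ differs from $\psi_h$ by $\mathcal{O}(h^{\bar N+1})$ per step.

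The new ingredient, absent from the energy statement, is equivariance. Differentiating the invariance hypotheses $U(e^{\tau S}x)=U(x)$ and $e^{-\tau S}A(e^{\tau S}x)=A(x)$ at $\tau=0$ gives $(\nabla U(x))^{\intercal}Sx=0$ and $A'(x)Sx=SA(x)$, while the hypotheses themselves give the equivariances $F(e^{\tau S}x)=e^{\tau S}F(x)$, $A(e^{\tau S}x)=e^{\tau S}A(x)$, $A'(e^{\tau S}x)=e^{\tau S}A'(x)e^{-\tau S}$. Substituting $e^{\tau S}x_n$ into the scheme \eqref{TSM2} and using these relations shows that $\{e^{\tau S}x_n\}$ is again a numerical solution; equivalently, $\psi_h$ commutes with the cotangent-lifted rotation $(x,p)\mapsto(e^{\tau S}x,e^{\tau S}p)$, a symplectic group action whose momentum map is precisely $M(x,p)=p^{\intercal}Sx$, i.e. the invariant \eqref{momentum} written in the momentum variable. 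Since the modified Hamiltonian is constructed order by order in a way that commutes with symmetries of the method, each $\widetilde H_{\bar N}$ is invariant under this action as well, so by Noether's theorem $M$ is an \emph{exact} first integral of the flow $\widetilde\psi_h^{\,\bar N}$.

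The quantitative estimate is then standard. Fix $\bar N=N+2$. On the compact set (in which, by the usual bootstrap, the modified trajectory also stays), the per-step defect $\abs{M(\psi_h z)-M(\widetilde\psi_h^{\,\bar N}z)}=\mathcal{O}(h^{\bar N+1})$ together with the exact conservation of $M$ along $\widetilde\psi_h^{\,\bar N}$ give, by telescoping, $\abs{M(x_n,v_n)-M(x_0,v_0)}\le Cnh^{\bar N+1}\le Ch^{\bar N+1}h^{-N-1}=Ch^{2}$ whenever $nh\le h^{-N}$. Finally, since $x_{n+1/2}=\tfrac12(x_n+x_{n+1})$ and $v_{n+1/2}=\tfrac12(v_n+v_{n+1})$ with $x_{n+1}-x_n=\mathcal{O}(h)$, $v_{n+1}-v_n=\mathcal{O}(h)$, a Taylor expansion of the smooth function $M$ yields $M(x_{n+1/2},v_{n+1/2})=\tfrac12\big(M(x_n,v_n)+M(x_{n+1},v_{n+1})\big)+\mathcal{O}(h^2)$, so the half-integer quantity inherits the $\mathcal{O}(h^2)$ bound and $\abs{M(x_{n+1/2},v_{n+1/2})-M(x_{1/2},v_{1/2})}\le Ch^2$ for $nh\le h^{-N}$.

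I expect the real work to be in the second step: checking that the backward-error construction commutes with the equivariance of the method, so that each $\widetilde H_{\bar N}$ is genuinely $e^{\tau S}$-invariant, and keeping the identification between the symplectic variables $p_n$ of the Remark and the midpoint quantities clean enough that no spurious $\mathcal{O}(h)$ term survives. A shortcut avoiding the modified Hamiltonian, and even the restriction $nh\le h^{-N}$, is the discrete Noether theorem: the discrete Lagrangian $L_d(x_n,x_{n+1})=hL\big(x_{n+1/2},\tfrac{x_{n+1}-x_n}{h}\big)$ that generates \eqref{TSM2} is invariant under the diagonal rotation, so pairing $\partial L_d/\partial x_{n+1}$ with $Sx_{n+1}$ yields an exactly conserved discrete momentum $J_d$; the identities $A'(x)Sx=SA(x)$ and $(\nabla U(x))^{\intercal}Sx=0$ then force $J_d=M(x_{n+1/2},v_{n+1/2})+\mathcal{O}(h^2)$ uniformly on the compact set, which gives the bound for all $n$.
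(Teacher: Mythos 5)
Your argument is essentially correct, but it follows a genuinely different route from the paper. The paper never invokes symplecticity or Noether's theorem here: it stays entirely within the backward-error framework set up for Theorem \ref{nearly energy pre thm02}, writes the modified second-order ODE \eqref{TSM1-equation32} for the midpoint interpolant $z(t)$, multiplies it by $(Sz)^{\intercal}$, and uses the identities $A'(z)Sz=SA(z)$, $z^{\intercal}S(\dot z\times B(z))=-\tfrac{\mathrm{d}}{\mathrm{d}t}(z^{\intercal}SA(z))$, $z^{\intercal}S\nabla U(z)=0$ together with the total-derivative lemma \eqref{fact in Hairer} (applied to $\langle SA(z),L_1L_2^{-1}(e^{hD})z\rangle-\langle L_1L_2^{-1}(e^{-hD})SA(z),z\rangle$) to produce a modified momentum $M_h=M+h^2M_2+\cdots$ with $\tfrac{\mathrm{d}}{\mathrm{d}t}M_h(z,\dot z)=\mathcal{O}(h^N)$ along the modified equation; the claimed bound then follows by the standard argument. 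Your route instead exploits the variational/symplectic structure in the $(x,p)$ variables plus equivariance of the scheme under $x\mapsto e^{\tau S}x$, and concludes via Noether applied to the modified Hamiltonian (or, in your shortcut, via the discrete Noether theorem). The one step you should not leave as a black box is the assertion that each $\widetilde H_{\bar N}$ inherits the $e^{\tau S}$-invariance: this is true (the recursive construction of the modified vector field preserves equivariance, and the group is connected so the Hamiltonian can be chosen invariant, not merely invariant up to constants), but it is exactly the point where your main argument carries its weight, so it deserves a proof or a precise citation. Your discrete-Noether shortcut closes that gap in a self-contained way, and I checked that the cancellation you invoke does occur: the $\mathcal{O}(h)$ discrepancy between the exactly conserved discrete momentum $J_d$ and $M(x_{n+1/2},v_{n+1/2})$ is $\tfrac{h}{2}\big(A(x_{n+1/2})^{\intercal}Sv_{n+1/2}+v_{n+1/2}^{\intercal}A'(x_{n+1/2})Sx_{n+1/2}\big)+\mathcal{O}(h^2)$, which vanishes by $A'(x)Sx=SA(x)$ and the skew-symmetry of $S$. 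That version even yields the $\mathcal{O}(h^2)$ bound uniformly for all $n$, with no restriction $nh\le h^{-N}$, which is stronger than the stated theorem. What the paper's approach buys in exchange is uniformity of technique: the same multiply-by-a-test-vector computation handles energy (where there is no Noether symmetry) and carries over to the modulated-Fourier analysis in the strong-field regime, whereas your argument is specific to quantities arising as momentum maps of symmetries of the discrete Lagrangian.
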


\begin{rem}
With these two results, it is confirmed that the method TSM2 has
good long time conservations of energy and momentum not only for a
constant magnetic field but also for a general magnetic field.
 This is a superiority of TSM2 over TSM1.
\end{rem}

\subsubsection{Experiment}\label{subse:expe}
In order to show the efficiency of the methods, we choose the
following two well-known methods for comparison.
\begin{itemize}
  \item BORIS. The  Boris method given in \cite{Boris1970}:
  $$x_{n+1}-2x_{n}+x_{n-1}=\frac{h}{2}\big(x_{n+1}-x_{n-1}\big)
\times \frac{1}{\epsilon}B(x_n)-h^2\nabla U(x_n).$$
  \item VARM. The variational method given by Example 6.2, Chap. VI
  of \cite{Hairer2006} and also discussed in \cite{Hairer2017-2,Webb2014}:
    $$x_{n+1}-2x_{n}+x_{n-1}=\frac{h}{2}\frac{1}{\epsilon}A'^{\intercal}(x_{n})\big(x_{n+1}-x_{n}\big)
-\frac{h}{2}\frac{1}{\epsilon}(A(x_{n+1})-A(x_{n})) -h^2\nabla
U(x_n).$$
\end{itemize}

We consider the charged particle system \eqref{charged-particle sts}
with (see \cite{Hairer2017-2})
$$\epsilon=1,\quad U(x)=\frac{1}{100\sqrt{x_1^2+x_2^2}},\quad B(x)=(0,0,\sqrt{x_1^2+x_2^2})^{\intercal}.$$
The momentum of this problem is given by
$$M(x,v)=(v_1-\frac{x_2}{3}\sqrt{x_1^2+x_2^2})x_2-(v_2+\frac{x_1}{3}\sqrt{x_1^2+x_2^2})x_1.$$
 We choose the initial
values $x(0)=(0,1,0.1)^{\intercal},\
v(0)=(0.09,0.05,0.20)^{\intercal} $ and  solve it in $[0,10000]$
with $h=0.1, 0.05$.  The conservations  of the energy  and momentum
for different methods  are displayed in Figures \ref{p1}-\ref{p2}.
It can be observed from Figures \ref{p1}-\ref{p2} that the methods
TSM1 and TSM2 proposed in this paper show remarkable long-time
numerical behaviour.

 \begin{figure}[ptb]
\centering
\includegraphics[width=14cm,height=3cm]{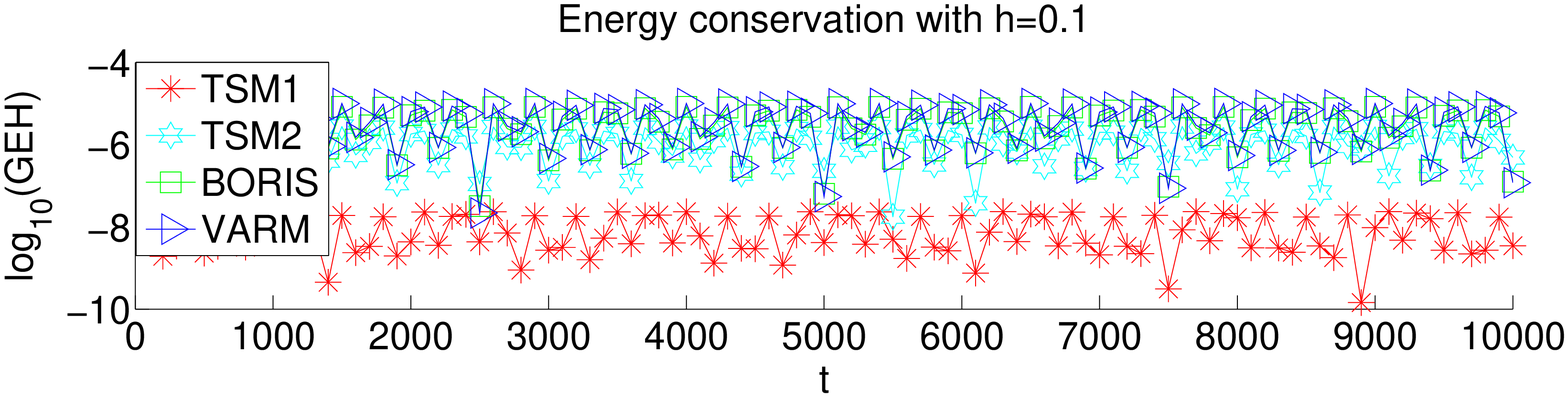}
\includegraphics[width=14cm,height=3cm]{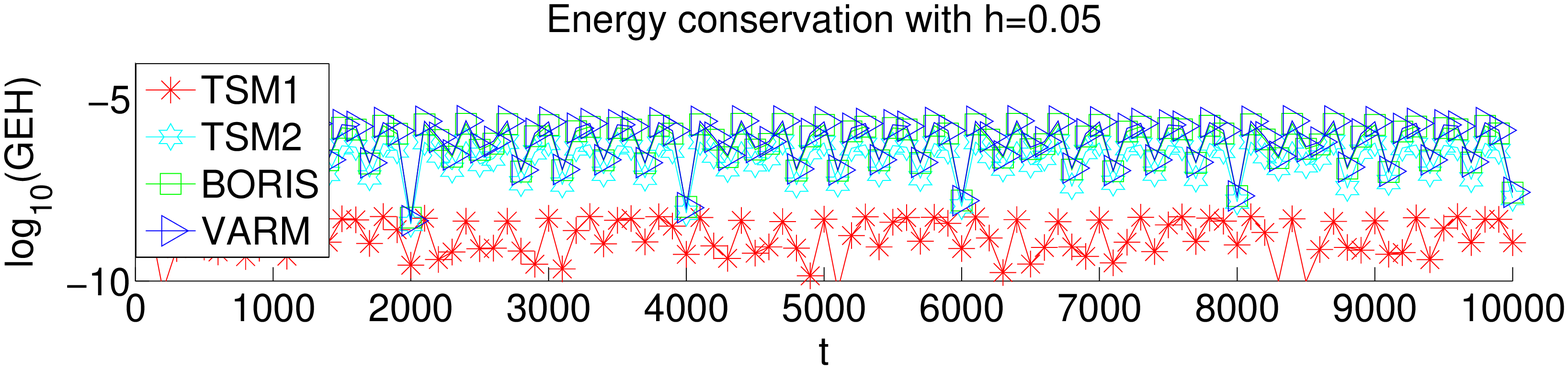}
\caption{The  errors of energy against  $t$.} \label{p1}
\end{figure}

\begin{figure}[ptb]
\centering
\includegraphics[width=14cm,height=3cm]{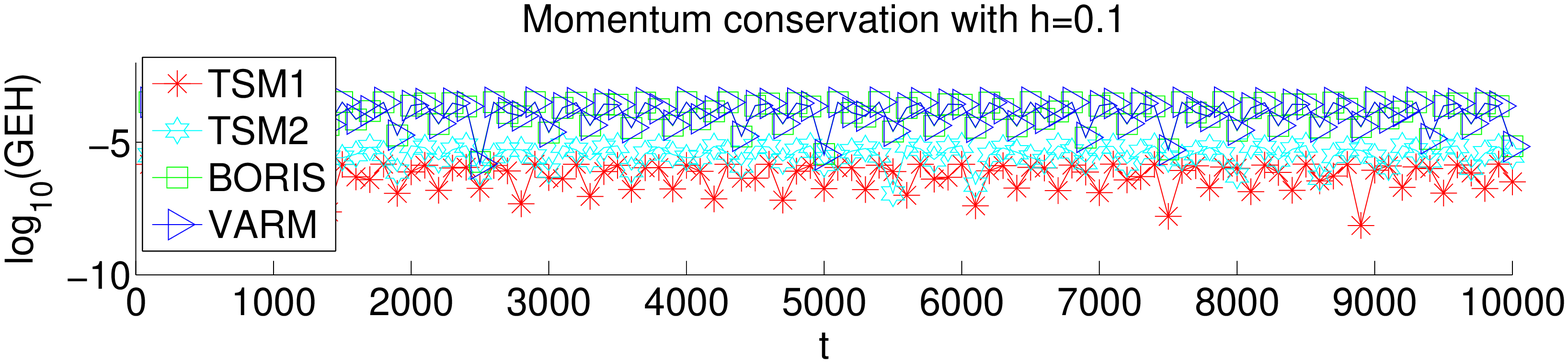}
\includegraphics[width=14cm,height=3cm]{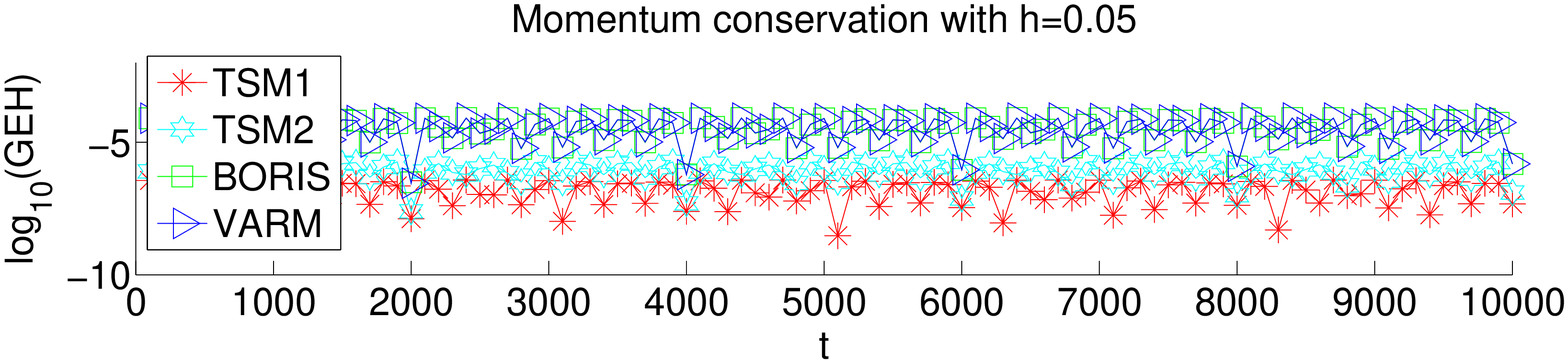}
\caption{The  errors of momentum against  $t$.} \label{p2}
\end{figure}

%\begin{figure}[ptb]
%\centering
%\includegraphics[width=6cm,height=7cm]{err1.eps}
%\includegraphics[width=6cm,height=7cm]{err2.eps}
%\caption{The  errors of energy against  $t$.} \label{p3}
%\end{figure}

% \textbf{Problem 2.}
%We consider another   potential $U(x)$ (see \cite{Hairer2017-1})
%$$U(x)=x_1^3-x_2^3+\frac{1}{5}x_1^4+x_2^4+x_3^4.$$
%and the magnetic field $B$ is the same as Problem 1. The initial
%values are taken as
%$$x(0)=(0,1,0.1)^{\intercal},\quad v(0)=(0.09,0.55,0.30)^{\intercal}.$$
%We   solve this problem in $[0,500000]$ with $h=0.01,0.005$ and the
%conservation  of   the  energy  is displayed in Figure \ref{p4}.
%
% \begin{figure}[ptb]
%\centering
%\includegraphics[width=14cm,height=3cm]{2H0.eps}
%\includegraphics[width=14cm,height=3cm]{2H1.eps}
%\caption{The  errors of energy against  $t$.} \label{p4}
%\end{figure}

%\begin{figure}[ptb]
%\centering
%\includegraphics[width=6cm,height=7cm]{2err1.eps}
%\includegraphics[width=6cm,height=7cm]{2err2.eps}
%\caption{The  errors of energy against  $t$.} \label{p5}
%\end{figure}

\subsection{Results in a strong field}
When the magnetic field is  strong, long time conservations can only
be obtained for the second method TSM2. In this subsection, we
present the results of the method TSM2  for charged-particle
dynamics in a strong magnetic field.
 We note
that if the magnetic field is further assumed to be a constant
field, TSM2 and TSM1 have the same scheme and thus they share  the
same long time behaviour.
\subsubsection{Near-conservation of energy.}
We define $$\xi(x)=2\arctan\big(\frac{h}{2\epsilon}\abs{B(x)}\big)$$
and consider the modified energy
$$H_h(x,v)=H(x,v)+(\xi\csc(\xi)-1)I(x,v)\abs{B(x)}.$$
%According to
%$$\xi\csc(\xi)=\frac{1}{\tan(\xi)}=\frac{1-\tan^2(\xi/2)}{2\tan(\xi/2)}
%=\frac{1-\big(\frac{h}{2\epsilon}\abs{B(x)}\big)^2}{2\big(\frac{h}{2\epsilon}\abs{B(x)}\big)}
%=\frac{\epsilon-
%\frac{h^2}{4\epsilon}\abs{B(x)}^2-h\abs{B(x)}}{h\abs{B(x)}},$$
%$H_h(x,v)$ can be formulated as
%$$H_h(x,v)=H(x,v)+\big(\frac{\epsilon}{h}-
%\frac{h}{4\epsilon}\abs{B(x)}^2-\abs{B(x)}\big) I(x,v).$$
\begin{theo}\label{nearly energy pre thm0s}
(\textbf{Energy conservation of TSM2.}) Suppose that  for $0 <
\epsilon \leq\epsilon_0$ and $0 < h \leq h_0$ with $ \epsilon_0> 0$
and $h_0
> 0$, the following holds. Let $N \geq 1$ be an arbitrary integer and the stepsize $h$ is
chosen such that $h\epsilon|B(x_n )| \leq2
\tan\big(\frac{\pi}{2(N+3)}\big)$ for some $N \geq 1$. It is assumed
further that the numerical solution  of the TSM2 stays in a compact
set $K$. Then TSM2 has the following near conservation of  modified
energy
$$\abs{H_h(x_{n+1/2},v_{n+1/2})-H_h(x_{1/2},v_{1/2})}\leq C\epsilon\ \ \ \textmd{for}\ \  nh\leq \epsilon^{-N},$$
where   the constant    $C$ is independent of $\epsilon, n$ and $h$
as long as $nh\leq h^{-N},$  but depends on $N$,  the bounds of the
$N + 1$ derivatives of $B$ and $E$ on the compact set $K$, and   the
constants in \eqref{ivb}.
\end{theo}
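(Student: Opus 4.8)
The plan is to prove Theorem~\ref{nearly energy pre thm0s} by a modulated Fourier expansion (MFE) of the numerical solution of \eqref{TSM2}, adapted to the strong-field regime, in the spirit of the analyses in \cite{Hairer2018,Hairer2017-2}. The first task is to pin down the \emph{numerical frequency} carried by the scheme. Freezing $B$ at a point and dropping $F$ and the $A'$-terms, the linearised two-step recursion in \eqref{TSM2} has characteristic roots $1$ and $\mathrm{e}^{-\mathrm{i}\xi(x)}$ with $\xi(x)=2\arctan\!\big(\tfrac{h}{2\epsilon}\abs{B(x)}\big)$; hence the scheme propagates the gyration by the angular increment $\xi(x)$ per step, i.e.\ with frequency $\xi(x)/h$, a distortion of the physical frequency $\abs{B(x)}/\epsilon$ to which it reduces as $h/\epsilon\to0$. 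This discrete dispersion relation is exactly what will produce the factor $\xi\csc\xi$ in the modified energy $H_h$. The hypothesis $\tfrac{h}{\epsilon}\abs{B(x_n)}\le 2\tan\!\big(\tfrac{\pi}{2(N+3)}\big)$ on the compact set $K$ keeps $0\le\xi\le\tfrac{\pi}{N+3}$, so $\mathrm{e}^{\mathrm{i} k\xi}\ne1$ for $0<\abs{k}\le N+2$ (no numerical resonances) and $\xi\csc\xi$ stays smooth and bounded.

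Next I would introduce a scalar phase $\phi_n$ with $\phi_{n+1}-\phi_n=\xi(x_{n+1/2})$ and look for the numerical solution in the form
$$x_n=\sum_{\abs{k}\le N+2}\mathrm{e}^{\mathrm{i} k\phi_n/\epsilon}\,z^k(nh)+\mathcal{O}(\epsilon^{N+1}),\qquad z^{-k}=\overline{z^{\,k}}.$$
Substituting this ansatz into \eqref{TSM2}, Taylor-expanding $B$, $A$, $F$ about the drift part $y:=z^0$, expanding the finite differences $x_{n\pm1}-x_n$ in powers of $h$, and separating the exponentials $\mathrm{e}^{\mathrm{i} k\phi_n/\epsilon}$ gives the modulation system: a second-order equation in the slow time for $y$ driven by quadratic-in-$z$ oscillatory terms, together with relations forcing $z^{\pm1}=\mathcal{O}(\epsilon)$ and $z^{\pm k}=\mathcal{O}(\epsilon^{\abs{k}})$. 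A bootstrap as in the standard theory then shows that all $z^k$ and their first $N+1$ derivatives are bounded uniformly in $\epsilon$ on intervals of length $\mathcal{O}(1)$, and that the truncated expansion reproduces $(x_n,v_n)$, hence also $x_{n+1/2}=\tfrac12(x_n+x_{n+1})$ and $v_{n+1/2}=h^{-1}(x_{n+1}-x_n)$, with defect $\mathcal{O}(\epsilon^{N+1})$.

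The conceptual core is to exploit that \eqref{TSM2} is a variational (symplectic two-step) integrator, so that the truncated modulation system is again Lagrangian: there is a formal action whose Euler--Lagrange equations agree, up to the defect, with the modulation equations. Two one-parameter symmetries then yield two almost-invariants via a Noether-type argument: time translation gives an energy-like quantity $\mathcal{H}[z]$, and the gauge rotation $z^k\mapsto\mathrm{e}^{\mathrm{i} k\tau}z^k$ gives a second quantity $\mathcal{J}[z]$. Using the dispersion relation of the first step, a direct computation identifies an explicit real-linear combination of $\mathcal{H}[z]$ and $\mathcal{J}[z]$, evaluated along the numerical solution, with $H_h(x_{n+1/2},v_{n+1/2})$ up to a uniform error $\mathcal{O}(\epsilon)$; the coefficient $\xi\csc\xi-1$ enters precisely because $\mathcal{J}$ measures the gyro-action with the numerical frequency $\xi/h$ rather than with $\abs{B}/\epsilon$, and $I(x,v)\abs{B(x)}$ is (to leading order) the oscillatory energy carried by $z^{\pm1}$.

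Finally I would estimate the drift: inserting the truncated MFE back into \eqref{TSM2} leaves a residual which, after discrete summation by parts against $\dot z^{-k}$, changes $\mathcal{H}[z]$ and $\mathcal{J}[z]$ by only $\mathcal{O}(\epsilon^{N+1})$ over each window of length $\mathcal{O}(1)$; summing over the $\mathcal{O}(\epsilon^{-N})$ windows needed to reach $nh\le\epsilon^{-N}$ and controlling the mismatch between the expansions on consecutive windows (the standard patching argument, using that the numerical solution stays in $K$) gives total drift $\mathcal{O}(\epsilon)$ for the combination above, hence $\abs{H_h(x_{n+1/2},v_{n+1/2})-H_h(x_{1/2},v_{1/2})}\le C\epsilon$ with $C$ depending only on $N$, the bounds on the first $N+1$ derivatives of $B$ and $E$ on $K$, and the constants in \eqref{ivb}. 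The step I expect to be the main obstacle is the construction of the MFE for the \emph{state-dependent} numerical frequency $\xi(x)/h$: since $\phi_n$ is not affine in $n$ the expansion is genuinely two-scale, and one must check that differentiating the modulation functions never produces negative powers of $\epsilon$, while simultaneously verifying that truncation preserves enough of the variational structure to keep the two almost-invariants and to fix the coefficient $\xi\csc\xi-1$ exactly.
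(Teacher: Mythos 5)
Your proposal follows essentially the same route as the paper: a modulated Fourier expansion of the TSM2 iterates with the numerical phase fixed by the discrete dispersion relation $\tan\big(\tfrac12\eta\dot\phi\big)=\tfrac{\eta}{2}\abs{B(\zeta^0)}$ (equivalently the characteristic roots $e^{\pm\mathrm{i}\xi}$ with $\xi=2\arctan\big(\tfrac{h}{2\epsilon}\abs{B}\big)$), two almost-invariants coming from the Lagrangian structure of the modulation system (the paper realises your Noether argument concretely by multiplying the modulation equations by $(\dot{\Upsilon}^k)^*$ resp.\ $-\mathrm{i}k(\Upsilon^k)^*$ and summing over $k$), identification of the energy almost-invariant with $H_h$ up to $\mathcal{O}(\epsilon)$, and the standard patching argument to reach times $\epsilon^{-N}$. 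The only cosmetic differences are that the paper identifies $H_h$ with the time-translation almost-invariant $\mathcal{H}[\zeta]$ alone (no nontrivial combination with the gauge invariant $\mathcal{J}$ is needed) and constructs the expansion on windows of length $\mathcal{O}(\epsilon)$ rather than $\mathcal{O}(1)$.
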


\subsubsection{Near-conservation of the magnetic moment}
We define the modified magnetic moment
$$I_h(x,v)=  \sec^2(\xi(x)/2)I(x,v)=  (\tan^2(\xi(x)/2)+1)I(x,v)
=\Big(\frac{h^2}{4\epsilon^2}\abs{B(x)}^2+1\Big)I(x,v).$$

\begin{theo}\label{nearly energy pre thm02s}
(\textbf{Magnetic moment conservation of TSM2.}) Under the
conditions of Theorem \ref{nearly energy pre thm02s},  the modified
magnetic moment is nearly conserved over long times
$$\abs{I_h(x_{n+1/2},v_{n+1/2})-I_h(x_{1/2},v_{1/2})}\leq C \epsilon\ \ \ \textmd{for}\ \  nh\leq \epsilon^{-N}.$$
\end{theo}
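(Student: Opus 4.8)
The argument is a modulated Fourier expansion (MFE) in the fast gyro-phase, and it runs in parallel to the proof of Theorem \ref{nearly energy pre thm0s}, from which I would borrow the whole construction of the expansion and the attendant size estimates. The plan is: (i) write the TSM2 numerical solution in the strong-field regime as an MFE whose phase is governed by the \emph{discrete} gyrofrequency; (ii) exhibit the almost-invariant of the associated modulation system coming from the rotational symmetry $(z^k)\mapsto(\mathrm{e}^{\mathrm{i}k\theta}z^k)$; (iii) identify this almost-invariant with $I_h$ up to $\mathcal{O}(\epsilon)$; (iv) patch over $\mathcal{O}(\epsilon^{-N})$ short intervals to reach $nh\le\epsilon^{-N}$.

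\emph{Step 1 (MFE of the scheme).} On an interval of length $\mathcal{O}(1)$ I would seek an expansion
$$x_n \;=\; \sum_{|k|<N}\mathrm{e}^{\mathrm{i}k\phi_n/\epsilon}\,z^k(t_n)\;+\;\mathcal{O}(\epsilon^{N}),\qquad t_n=nh,$$
where the phase increment $\phi_{n+1}-\phi_n$ is driven by the discrete gyrofrequency built from $\xi(x_n)=2\arctan\!\big(\tfrac{h}{2\epsilon}\abs{B(x_n)}\big)$, rather than by $\tfrac{h}{\epsilon}\abs{B}$ — this is precisely where the factor $\sec^2(\xi/2)=1+\tfrac{h^2}{4\epsilon^2}\abs{B}^2$ will enter. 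Substituting the ansatz into the two-step formula \eqref{TSM2}, Taylor-expanding $A$, $A'$ and $F$ around $z^0(t_n)$, and comparing coefficients of $\mathrm{e}^{\mathrm{i}k\phi_n/\epsilon}$ produces the modulation system for the $z^k$. One verifies the size bounds $z^0=\mathcal{O}(1)$, $z^{\pm1}=\mathcal{O}(\epsilon)$, $z^k=\mathcal{O}(\epsilon^{|k|})$ (up to truncation) together with the analogous bounds on derivatives, and that the defect upon re-insertion into \eqref{TSM2} is $\mathcal{O}(\epsilon^{N})$ per step. The hypothesis $h\epsilon\abs{B(x_n)}\le 2\tan\!\big(\tfrac{\pi}{2(N+3)}\big)$ is exactly what keeps the relevant denominators (discrete analogues of $1-\mathrm{e}^{\mathrm{i}k\xi}$) bounded away from zero, so the construction closes for $N$ modulation functions.

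\emph{Steps 2--3 (almost-invariant and its identification).} Because TSM2 is a variational integrator, its modulation system carries a discrete Lagrangian structure invariant under $(z^k)\mapsto(\mathrm{e}^{\mathrm{i}k\theta}z^k)$; a discrete Noether argument (Chap.\ XIII of \cite{Hairer2006}, adapted to the two-step setting) yields a function $\mathcal{I}[z](t)$ of the modulation functions that is conserved up to $\mathcal{O}(\epsilon^{N})$ along the modulation system, i.e.\ changes by at most $\mathcal{O}(h\epsilon^{N})$ per step. It then remains to evaluate $\mathcal{I}[z]$ on the modulation coefficients of the actual numerical solution and check
$$\mathcal{I}[z](t_n)\;=\;I_h(x_{n+1/2},v_{n+1/2})+\mathcal{O}(\epsilon).$$
To leading order $\mathcal{I}[z]$ is a constant multiple of $\abs{z^1}^2\abs{B(z^0)}/\epsilon^2$; using $v_{n+1/2}=\tfrac12(v_n+v_{n+1})$, the midpoint evaluation of $B$ in \eqref{TSM2}, and the relation $\tan(\xi/2)=\tfrac{h}{2\epsilon}\abs{B}$, one finds that the discrete perpendicular velocity picks up exactly the extra factor $\sec^2(\xi/2)$ relative to the continuous magnetic moment $I$, which is the definition of $I_h$. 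Summing the per-step drift over the $\mathcal{O}(\epsilon^{-N}/h)$ steps of one $\mathcal{O}(1)$ interval, then patching $\mathcal{O}(\epsilon^{-N})$ such intervals — re-initialising the modulation functions at each patch and using the Step 1 bounds to control the mismatch — gives a total drift $\mathcal{O}(\epsilon)$, hence
$$\abs{I_h(x_{n+1/2},v_{n+1/2})-I_h(x_{1/2},v_{1/2})}\le C\epsilon\qquad\text{for}\quad nh\le\epsilon^{-N}.$$

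\emph{Main obstacle.} The heaviest part is Step 1: building a \emph{consistent} modulated Fourier expansion for a \emph{two-step} method in the strong-field limit. One must handle the spurious (parasitic) root of the scheme, choose the initial modulation data so that the expansion genuinely represents $x_0,x_1$ under the bounds \eqref{ivb}, and propagate uniform estimates on the $z^k$ and their derivatives long enough to reach $\epsilon^{-N}$. Intertwined with this is getting the discrete gyrofrequency — and therefore the precise correction $\sec^2(\xi/2)$ — exactly right: a naive expansion would reproduce only the continuous $I$, and it is the midpoint/linear-interpolant structure of \eqref{TSM2} that forces the relation $\tan(\xi/2)=\tfrac{h}{2\epsilon}\abs{B}$. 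Once the expansion and its rotational almost-invariant are available, Steps 2--3 are essentially a transcription of the energy argument already carried out for Theorem \ref{nearly energy pre thm0s}.
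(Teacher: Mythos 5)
Your proposal follows essentially the same route as the paper: a modulated Fourier expansion of the TSM2 iterates (the paper works with the midpoints $x_{n+1/2}$) whose phase obeys the discrete relation $\tan(\tfrac12\eta\dot\phi)=\tfrac{\eta}{2}\abs{B(\zeta^0)}$, an almost-invariant obtained from the rotational symmetry $\zeta^k\mapsto \mathrm{e}^{\mathrm{i}k\theta}\zeta^k$ (the paper implements your Noether argument by multiplying the modulation equations with $-\mathrm{i}k(\Upsilon^k)^*$ and summing), identification of its dominant part $\propto\abs{\zeta^1}^2\abs{B(\zeta^0)}/\epsilon^2$ with $\sec^2(\xi/2)\,I=I_h$ up to $\mathcal{O}(\epsilon)$, and the standard patching over $\mathcal{O}(\epsilon^{-N})$ intervals. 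The construction of the expansion itself is, as you anticipate, the heavy step, and the paper likewise delegates it (with modified coefficients $c^k_l,d^k_l$ for the operators $L_1L_2^{-1}$ and $L_1^2L_2^{-2}$) to the framework of Hairer--Lubich for the variational integrator.
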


\begin{rem}
It is noted that the long term analysis of   the method VARM was
researched recently in   \cite{Hairer2018} for charged-particle
dynamics in a
 strong magnetic field. The conservations of a modified energy and a
 modified magnetic moment were proved there for VARM. It can be seen from Theorems \ref{nearly energy pre thm0s}-\ref{nearly energy pre thm02s}
 that TSM2  also has   modified energy and
 modified magnetic moment conservations. Moreover, according to   the analysis of
 \cite{Hairer2018} and the results of Theorems \ref{nearly energy pre thm0s}-\ref{nearly energy pre thm02s},
 for small $\xi$, the modified energy and
 modified magnetic moment of VARM and TSM2  are given respectively as
 \renewcommand\arraystretch{1.1}
$$\begin{tabular}{|c|c|c|}
  \hline
  % after \\: \hline or \cline{col1-col2} \cline{col3-col4} ...
  Method & Modified Energy &  Modified Magnetic Moment\\
\hline
  VARM & $H+\frac{5}{12}\xi^2I\abs{B}$ & $(1+ \frac{1}{2}\xi^2)I$ \\
  TSM2 & $H+\frac{1}{6}\xi^2I\abs{B}$ & $(1+ \frac{1}{4}\xi^2)I$ \\
  \hline
\end{tabular}$$
This shows that compared with VARM,  the modified energy and
 modified magnetic moment that TSM2 nearly preserves  are closer to
 the original energy $H$ and magnetic moment $I$.
\end{rem}
\subsubsection{Experiment}
We still consider the problem given in Subsection \ref{subse:expe}
but with $\epsilon=0.01$. We  solve it in $[0,10000]$  by the second
method with $h=0.01, 0.005$. The conservations  of the modified
energy  and the modified magnetic moment are displayed in Figure
\ref{p3}.

 \begin{figure}[ptb]
\centering
\includegraphics[width=14cm,height=3cm]{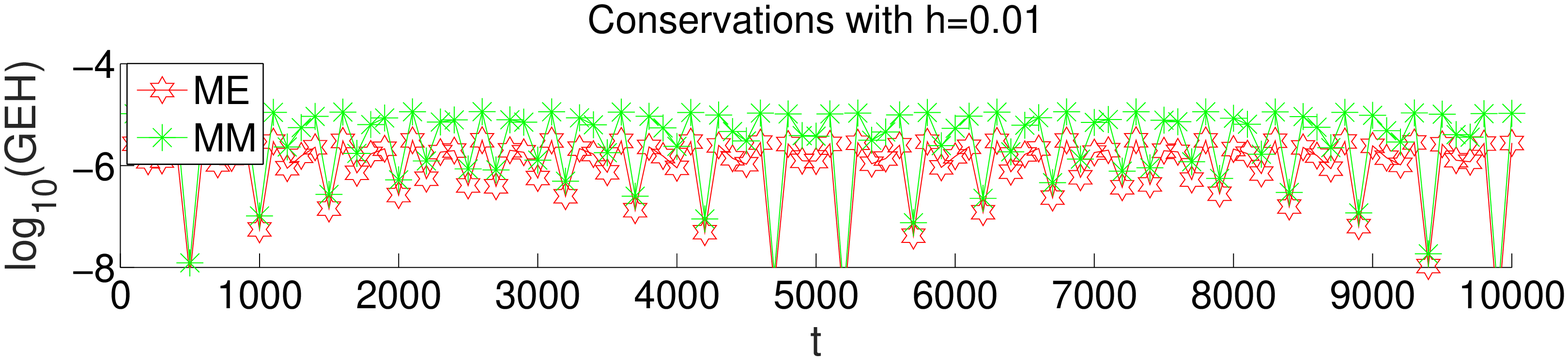}
\includegraphics[width=14cm,height=3cm]{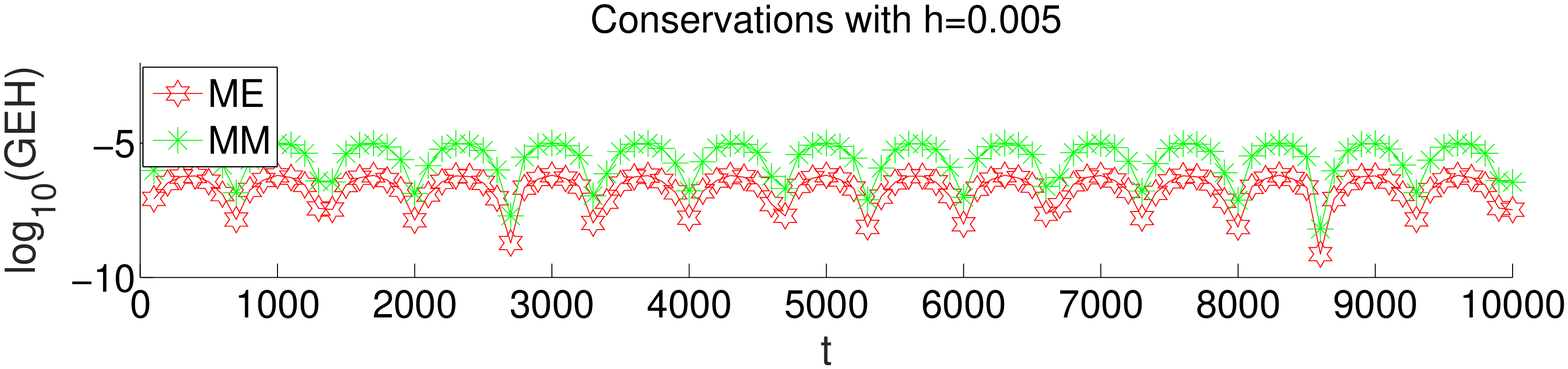}
\caption{The  errors of  modified energy (ME) and   modified
magnetic moment (MM) against  $t$.} \label{p3}
\end{figure}

\section{Proof of Theorem \ref{energy pre thm}}\label{sec:speci}
Under the special choice of $U$, it can be verified that
$$F\big(x_{n+1/2}\big)=\int_{0}^1 F\big(x_{n}+\sigma(x_{n+1}-x_{n}) \big) d\sigma.$$
Therefore, the method \eqref{TSM1} is identical to
\begin{equation}\label{M1 NEW SCHEME}
\left\{\begin{array}[c]{ll}x_{n+1}=x_{n}+
hv_{n+1/2},\\
v_{n+1}=v_{n}+h v_{n+1/2} \times \frac{1}{\epsilon} B(x_{n+1/2})+h
\int_{0}^1 F\big(x_{n}+\sigma(x_{n+1}-x_{n}) \big) d\sigma.
\end{array}\right.
\end{equation}
The system \eqref{charged-particle sts} as well as $v=\dot{x}$ can
be rewritten as
\begin{equation*}%\label{charged-sts-first order}
\begin{array}[c]{ll}
\frac{d}{dt }\left(
  \begin{array}{c}
    x \\
    v \\
  \end{array}
\right)  =\left(
            \begin{array}{cc}
              0 & I \\
              -I & \tilde{B}(x) \\
            \end{array}
          \right) \left(
  \begin{array}{c}
    -F(x) \\
    v \\
  \end{array}
\right)=\left(
            \begin{array}{cc}
              0 & I \\
              -I & \tilde{B}(x) \\
            \end{array}
          \right)\nabla E(x,v),
\end{array}
\end{equation*}
where the skew symmetric matrix $\tilde{B}$ is given by
$$\tilde{B}(x)=\frac{1}{\epsilon}\left(
                   \begin{array}{ccc}
                     0 & B_3(x) & -B_2(x) \\
                     -B_3(x) & 0 & B_1(x) \\
                     B_2(x) & -B_1(x) & 0 \\
                   \end{array}
                 \right).
$$
It is noted that this is a Poisson system and it has been shown in
\cite{Cohen2011} that the following scheme
\begin{equation}\label{EP M}
 \begin{aligned}\left(
  \begin{array}{c}
    x_{n+1} \\
    v_{n+1} \\
  \end{array}
\right) =\left(
  \begin{array}{c}
    x_{n} \\
    v_{n} \\
  \end{array}
\right)+h \left(
            \begin{array}{cc}
              0 & I \\
              -I & \tilde{B}\Big(\frac{x_{n}+x_{n+1}}{2}\Big) \\
            \end{array}
          \right) \left(
  \begin{array}{c}
    -\int_{0}^1F(x_n+\sigma
(x_{n+1}-x_n)) d \sigma\\
    \int_{0}^1(v_n+\sigma
(v_{n+1}-v_n)) d \sigma\\
  \end{array}
\right)
\end{aligned}
\end{equation}
preserves  the energy $E$ in \eqref{energy of cha} exactly. It is
easy to see that \eqref{M1 NEW SCHEME} and \eqref{EP M} have the
same scheme. Therefore, the proof is complete.

\section{Long term analysis in a normal magnetic field}\label{sec:normal}
In the analysis of this section, we will use   backward error
analysis (see Chap. IX of \cite{Hairer2006}).
\subsection{Proof of Theorem \ref{nearly energy pre thm0}}
Following the idea of backward error analysis, we search  for two
modified differential equations (as a formal series in powers of
$h$) such that their solutions $y(t)$ and $w(t)$ formally satisfy
$y(nh) = x_n$ and $w(nh) = v_n$, where $x_n$  and $v_n$ represent
the numerical solution obtained by the   first method \eqref{TSM1}.
According to the scheme of the first method, such functions have to
satisfy
\begin{equation}\label{TSM1-equation}
\left\{\begin{array}[c]{ll}y(t+h)=y(t)+
h\frac{w(t+h)+w(t)}{2},\\
w(t+h)=w(t)+h \frac{w(t+h)+w(t)}{2} \times
B\Big(\frac{y(t+h)+y(t)}{2}\Big)+h
 F\big(\frac{y(t+h)+y(t)}{2}\big).
\end{array}\right.
\end{equation}
From this scheme, we define
\begin{equation*}%\label{Operator}
\begin{array}[c]{ll}L_1(\zeta)=\zeta-1,\ \ \ L_2(\zeta)=\frac{\zeta+1}{2}.
\end{array}
\end{equation*}
Then \eqref{TSM1-equation} becomes
\begin{equation*}%\label{TSM1-equation1}
\left\{\begin{array}[c]{ll}L_1(e^{hD})y(t)=hL_2(e^{hD})w(t),\\
L_1(e^{hD})w(t)=h\big(L_2(e^{hD})w(t)\big) \times
B\big(L_2(e^{hD})y(t)\big)+h
 F\big(L_2(e^{hD})y(t)\big),
\end{array}\right.
\end{equation*}
where $D$  is the differential operator.  Letting
$$z(t)=L_2(e^{hD})y(t),$$ one obtains
\begin{equation*}%\label{TSM1-equation2}
\frac{1}{h^2}L^2_1L_2^{-2}(e^{hD})z(t)=\frac{1}{h}\big(L_1L_2^{-1}(e^{hD})z(t))\times
B(z)+F(z).
\end{equation*}
Using the following properties
\begin{equation}\label{Operator pro}
\begin{array}[c]{ll}
&L^2_1L_2^{-2}(e^{hD})=h^2D^2-\frac{1}{6}h^4D^4+\frac{17}{720}h^6D^6+\cdots,\\
&L_1L_2^{-1}(e^{hD})=hD-\frac{1}{12}h^3D^3+\frac{1}{120}h^5D^5+\cdots,\\
\end{array}
\end{equation}
we  obtain
\begin{equation}\label{TSM1-equation3}
\ddot{z}-\frac{1}{6}h^2z^{(4)}+\frac{17}{720}h^4z^{(6)}-\ldots=\Big(\dot{z}-\frac{1}{12}h^2z^{(3)}+\frac{1}{120}h^4z^{(5)}-\ldots
\Big)\times B(z)+F(z).
\end{equation}
This   gives the modified differential equation for $z$, which can
be rewritten as the following second-order differential equation
\begin{equation*}
\ddot{z}=\dot{z}\times
B(z)+F(z)+h^2G_2(z,\dot{z})+h^4G_4(z,\dot{z})+\ldots
\end{equation*}
with unique $h$-independent functions $G_{2j}(z,\dot{z})$.

 It is
remarked  that for even values of $k$ the expression
$\dot{z}^{\intercal}z^{ (k)}$ is a total differential. In the light
of   the fact that $a^{\intercal}\big(a\times B(b)\big)=0$ for any
$a,b$, a multiplication of \eqref{TSM1-equation3} with
$\dot{z}^{\intercal}$ implies
\begin{equation*}
 \begin{array}[c]{ll} \frac{\textmd{d}}{\textmd{d}t}
\Big(\frac{1}{2}\dot{z}^{\intercal}\dot{z}+U(z)-\frac{h^2}{6}\big(\dot{z}^{\intercal}z^{(3)}-\frac{1}{2}\ddot{z}^{\intercal}\ddot{z}\big)+\ldots\Big)
\\=h^2\dot{z}^{\intercal}\Big(\big(-\frac{1}{12}z^{(3)}+\frac{h^2}{120}z^{(5)}-\ldots\big)
\times
 B(z)\Big)+\mathcal{O}(h^N).
\end{array}
\end{equation*}
This shows that there exist $h$-independent functions $E_{2j} (x,v)$
such that the function
$$E_h(x,v)=E(x,v)+h^2E_{2} (x,v)+h^4E_{4} (x,v)+\cdots,$$
truncated at the $\mathcal{O}(h^N)$ term, satisfies
\begin{equation*}%\label{MOEN-1}
 \begin{array}[c]{ll} \frac{\textmd{d}}{\textmd{d}t}
 E_h(z,\dot{z})=h^2\dot{z}^{\intercal}\Big(\big(-\frac{1}{12}z^{(3)}+\frac{h^2}{120}z^{(5)}-\ldots\big) \times
 B(z)\Big)+\mathcal{O}(h^N)
\end{array}
\end{equation*}
along solutions of the modified differential equation
\eqref{TSM1-equation3}.

 If the magnetic field $B(x) = B$ is constant, it is clear that for odd values of $k$ the expression
$\dot{z}^{\intercal}\big(z^{(k)}\times B(z)\big)$ is a total
differential. Then  there exist $h$-independent functions
$\tilde{E}_{2j} (x,v)$ such that the function
$$\tilde{E}_h(x,v)=E(x,v)+h^2\tilde{E}_{2} (x,v)+h^4\tilde{E}_{4} (x,v)+\cdots,$$
truncated at the $\mathcal{O}(h^N)$ term, satisfies $
  \frac{\textmd{d}}{\textmd{d}t}
 \tilde{E}_h(z,\dot{z})=\mathcal{O}(h^N)
$ along solutions of  \eqref{TSM1-equation3}. By a standard argument
given in Chap. IX of \cite{Hairer2006}, Theorem \ref{nearly energy
pre thm0} is proved.

\subsection{Proof of Theorem \ref{nearly mom pre thm0}}
First of all, we note that $z^{\intercal}S(\dot{z}\times
B(z))=-\frac{\textmd{d}}{\textmd{d}t}(z^{\intercal}SA(z))$, $S
\nabla U(z)=0$ and for even values of $k$ the expression
$z^{\intercal}Sz^{(k)}$ is a total differential (see
\cite{Hairer2017-1}). Then by multiplying \eqref{TSM1-equation3}
with $z^{\intercal} S$, one obtains  that there exist
$h$-independent functions $M_{2j} (x,v)$ such that the function
$$M_h(x,v)=M(x,v)+h^2M_{2} (x,v)+h^4M_{4} (x,v)+\cdots,$$
truncated at the $\mathcal{O}(h^N)$ term, satisfies
\begin{equation*}%\label{MOEN-1}
 \begin{array}[c]{ll} \frac{\textmd{d}}{\textmd{d}t}
 M_h(z,\dot{z})=h^2z^{\intercal}S\Big(\big(-\frac{1}{12}z^{(3)}+\frac{h^2}{120}z^{(5)}-\ldots\big) \times
 B(z)\Big)+\mathcal{O}(h^N)
\end{array}
\end{equation*}
along solutions of  \eqref{TSM1-equation3}.
 For constant $B$, one gets $Sz=z\times B$
and for odd values of $k$, $(z\times B)^{\intercal}(z^{(k)}\times
B)$  is a total differential. Then there exist $h$-independent
functions $\tilde{M}_{2j} (x,v)$ such that
$$\tilde{M}_h(x,v)=M(x,v)+h^2\tilde{M}_{2} (x,v)+h^4\tilde{M}_{4} (x,v)+\cdots,$$
truncated at the $\mathcal{O}(h^N)$ term, satisfies $
 \frac{\textmd{d}}{\textmd{d}t}
 \tilde{M}_h(z,\dot{z})=\mathcal{O}(h^N)
$ along solutions of  \eqref{TSM1-equation3}. This completes the
proof of Theorem \ref{nearly mom pre thm0}.

\subsection{Proof of Theorem \ref{nearly energy pre thm02}}
For the second method, we still use $y(t)$ and $w(t)$  to denote the
solutions of   modified differential equations such that $y(nh) =
x_n$ and $w(nh) = v_n$. These equations read
\begin{equation*}%\label{TSM1-equation2}
\begin{array}[c]{ll}&y(t+h)-2y(t)+y(t-h)\\
=&\frac{h}{2}A'^{\intercal}(
(y(t+h)+y(t))/2)\big(y(t+h)-y(t)\big)\\& +
\frac{h}{2}A'^{\intercal}((y(t)+y(t-h))/2)
\big(y(t)-y(t-h)\big)  \\
&-h\big(A((y(t+h)+y(t))/2)-A((y(t)+y(t-h))/2)\big)\\
&+\frac{h^2}{2}\big(F((y(t+h)+y(t))/2)+F((y(t)+y(t-h))/2)\big).
\end{array}
\end{equation*}
By the operators $L_1,L_2$ and by letting $z(t)=L_2(e^{hD})y(t),$
these can be rewritten as
\begin{equation}\label{TSM1-equation32}
\frac{1}{h^2}L^2_1L_2^{-2}(e^{hD})z
=\frac{1}{h}A'^{\intercal}(z)(L_1L_2^{-1}(e^{hD})z)-
\frac{1}{h}L_1L_2^{-1}(e^{hD}) A(z)+F(z),
\end{equation}
which gives the modified differential equation
\begin{equation*}
\ddot{z}=\dot{z}\times
B(z)+F(z)+h^2G_2(z,\dot{z})+h^4G_4(z,\dot{z})+\ldots,
\end{equation*}
where $G_{2j}(z,\dot{z})$ for $j=1, \ldots$ are $h$-independent
functions determined uniquely.

 A
multiplication of \eqref{TSM1-equation32} with $\dot{z}^{\intercal}$
yields
\begin{equation}\label{TSM132}
 \begin{array}[c]{ll} \frac{1}{h^2}\dot{z}^{\intercal}
 (L^2_1L_2^{-2}(e^{hD})z)
=&\frac{1}{h}\dot{z}^{\intercal}\Big(A'^{\intercal}(z)(L_1L_2^{-1}(e^{hD})z)-L_1L_2^{-1}(e^{hD})
A(z)\Big)\\
&- \frac{\textmd{d}}{\textmd{d}t}U(z)+\mathcal{O}(h^N).
\end{array}
\end{equation}
The left-hand side is the time derivative of an expression in which
the appearing second and higher derivatives of $z$ can be
substituted as functions of $(z, \dot{z})$.  The first term in the
right-hand side of \eqref{TSM132} can also be written as the time
derivative of a function by using the way given in
\cite{Hairer2017-2} as follows. It is shown in \cite{Hairer2017-2}
that for a function $f$ that is analytic at $0$, partial integration
shows that for time-dependent smooth functions $u$ and $v$,
\begin{equation}\label{fact in Hairer}
\langle f(hD)u,v\rangle-\langle u,f(-hD)v\rangle\  \textmd{is\ a
total\ derivative\ up\ to}\ \mathcal{O}(h^N)\ \textmd{for\
arbitrary}\ N,
\end{equation}
where $\langle \cdot,\cdot\rangle$ is  Euclidean inner product.
Moreover, it is true that
$$\langle \dot{z},A'^{\intercal}(z)(L_1L_2^{-1}(e^{hD})z)\rangle=\langle A'(z)\dot{z},L_1L_2^{-1}(e^{hD})z\rangle
=\langle D A(z),L_1L_2^{-1}(e^{hD})z\rangle.$$ Thence the first term
in the right-hand side of \eqref{TSM132} becomes
$$\frac{1}{h}\big(\langle D A(z),L_1L_2^{-1}(e^{hD})z\rangle
-\langle L_1L_2^{-1}(e^{hD}) A(z),Dz\rangle\big),$$ which is  a
total  derivative  up  to $\mathcal{O}(h^{N-1})$ for  arbitrary $N$
by considering \eqref{fact in Hairer} with
$f(hD)=L_1L_2^{-1}(e^{hD})/(hD)$, $u=DA(z)$ and $v=Dz$
%%%%%%%%%%%%%%%%%%%%%%%%%%%%%%%%%%%%%%%%%%%%%%%%%%%%%%%%%%%%%%%%%%%%%%%%
\footnote{{   Clearly, it follows from \eqref{Operator pro} that
$f(-hD)=f(hD)$, which is used here. }}.
%%%%%%%%%%%%%%%%%%%%%%%%%%%%%%%%%%%%%%%%%%%%%%%%%%%%%%%%%%%%%%%%%%%%%%%%
Meanwhile, this function is of magnitude $\mathcal{O}(h^{2})$,
because
$\frac{1}{h}L_1L_2^{-1}(e^{hD})z=\dot{z}+\mathcal{O}(h^{2})$.

According to the above analysis,  there exist $h$-independent
functions $E_{2j} (x,v)$ such that the function
$$E_h(x,v)=E(x,v)+h^2E_{2} (x,v)+h^4E_{4} (x,v)+\cdots,$$
truncated at the $\mathcal{O}(h^N)$ term, satisfies
$\frac{\textmd{d}}{\textmd{d}t}
 E_h(z,\dot{z})=\mathcal{O}(h^N)
$ along solutions of the modified differential equation
\eqref{TSM1-equation32}. The proof is complete.

\subsection{Proof of Theorem \ref{nearly mom pre thm02}}
The proof is analogous to the previous proof when
\eqref{TSM1-equation32}  is multiplied  with $(Sz)^{\intercal}$. It
is noted that by using $A'(z)Sz=SA(z)$ (see \cite{Hairer2017-2}), we
have
%%%%%%%%%%%%%%%%%%%%%%%%%%%%%%%%%%%%%%%%%%%%%%%%%%%%%%%%%%%%%%%%%%%%%%%%
\footnote{{  We remark that
$L_1L_2^{-1}(e^{hD})=-L_1L_2^{-1}(e^{-hD})$ is used here. }}
%%%%%%%%%%%%%%%%%%%%%%%%%%%%%%%%%%%%%%%%%%%%%%%%%%%%%%%%%%%%%%%%%%%%%%%%
\begin{equation*}
 \begin{array}[c]{ll} &(Sz)^{\intercal}A'^{\intercal}(z)(L_1L_2^{-1}(e^{hD})z)-
(Sz)^{\intercal}L_1L_2^{-1}(e^{hD}) A(z)\\
=& (A'(z)Sz)^{\intercal} (L_1L_2^{-1}(e^{hD})z)+
 z ^{\intercal}L_1L_2^{-1}(e^{hD}) SA(z)\\
=& (SA(z))^{\intercal} (L_1L_2^{-1}(e^{hD})z)+
 z ^{\intercal}L_1L_2^{-1}(e^{hD}) SA(z)
 \\
=& \langle SA(z),L_1L_2^{-1}(e^{hD})z\rangle -\langle
L_1L_2^{-1}(e^{-hD}) SA(z),z\rangle,
\end{array}
\end{equation*}
which is  a total  derivative  up to $\mathcal{O}(h^N)$ for
arbitrary  $N$.
 According to $z^{\intercal}S(\dot{z}\times
B(z))=-\frac{\textmd{d}}{\textmd{d}t}(z^{\intercal}SA(z))$ and
$z^{\intercal}S \nabla U(z)=0$, it is obtained    that there exist
$h$-independent functions $M_{2j} (x,v)$ such that the function
$$M_h(x,v)=M(x,v)+h^2M_{2} (x,v)+h^4M_{4} (x,v)+\cdots,$$
truncated at the $\mathcal{O}(h^N)$ term, satisfies $
\frac{\textmd{d}}{\textmd{d}t}
 M_h(z,\dot{z})=\mathcal{O}(h^N)
$ along solutions of \eqref{TSM1-equation32}. This shows the result.

\section{Long term analysis in a strong magnetic field}\label{sec:strong}
In this section, we will use the technique of  modulated Fourier
expansion  for studying long-time conservation properties. Modulated
Fourier expansion was firstly developed in  \cite{Hairer00} and was
extended to systems with solution dependent frequency in
\cite{Hairer16,Hairer2018}. Using the novel modulated Fourier
expansion with state-dependent frequencies and eigenvectors recently
developed in \cite{Hairer2018}, we will derive the expansion of the
second method and show two almost-invariants of the modulated
functions.

\subsection{Modulated Fourier expansion}
\begin{prop}\label{MFE thm}
  For $0\leq nh\leq T_{\epsilon}$ with
$T_{\epsilon}=\mathcal{O}(\epsilon)$, it is assumed that the
numerical solution $x_n$ of the second method \eqref{TSM2} stays in
a compact set $K$ and
\begin{equation*}%\label{im con}
 \frac{h}{\epsilon} \abs{B(x_{n+1/2})}\leq
 2\tan\big(\frac{C}{2N+2}\big)\ \ \textmd{with}\  C< \pi
\end{equation*}
for some integer $N\geq 1$.  Then the numerical solution $x_{n+1/2}$
 admits the following   modulated Fourier expansion
\begin{equation*}
\begin{aligned} &x_{n+1/2}=  \sum\limits_{\abs{k} \leq N+1} \mathrm{e}^{\mathrm{i}k \phi
 (t)/\epsilon}\zeta^k(t)+\mathcal{O}(t^2\epsilon^2),\ \
 t=(n+\frac{1}{2})h,
\end{aligned}
%\label{MFE-ERKN}%
\end{equation*}
where the phase function  $\phi
 (t)$ is given by
\begin{equation}
\tan\big(\frac{1}{2}\eta
\dot{\phi}\big)=\frac{\eta}{2}\abs{B(\zeta^0)}.
\label{phi}%
\end{equation}
The bounds of the functions $\zeta^k(t)$ as well as their
derivatives (up to order $N$) are
\begin{equation*}
\zeta^k(t)=\mathcal{O}(\epsilon^{\abs{k}})\ \ \textmd{for \  all}\
\abs{k}\leq N+1
%\label{bound}%
\end{equation*}
and further one has
\begin{equation*}
\dot{\zeta}^0  \times B(\zeta^0)=\mathcal{O}(\epsilon),\ \
P_j(\zeta^0)\zeta^k=\mathcal{O}(\epsilon^2)\ \ \textmd{for}\
\abs{k}=1,\ j\neq k.
%\label{bound furth}%
\end{equation*}
The functions $\zeta^k(t)$ are unique up to
$\mathcal{O}(\epsilon^{N+2})$ and the constants symbolised by
$\mathcal{O}$  independent of $n$ and $\epsilon$ as long as $0 \leq
nh \leq T_{\epsilon}$, but depend on $N, T$, the constants in
\eqref{ivb}, and the bounds of derivatives of $A(x)$ and $U(x).$ The
  orthogonal projections $P_j$ onto the eigenspaces are referred to
  \cite{Hairer2018}.
\end{prop}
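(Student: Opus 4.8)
The strategy is to construct the modulated Fourier expansion directly for the two-step scheme \eqref{TSM2}, imitating the construction of \cite{Hairer2018} for VARM but with the symbol of the linear multistep operator replaced by the rational symbol $L_1L_2^{-1}(e^{hD})$ that arises for TSM2. First I would insert the ansatz $x_{n+1/2}=\sum_{|k|\le N+1}\mathrm{e}^{\mathrm{i}k\phi(t)/\epsilon}\zeta^k(t)$ (with $t=(n+\tfrac12)h$) into the scheme rewritten in operator form as in \eqref{TSM1-equation32}, i.e. $\tfrac{1}{h^2}L_1^2L_2^{-2}(e^{hD})z = \tfrac1h A'^{\intercal}(z)(L_1L_2^{-1}(e^{hD})z)-\tfrac1h L_1L_2^{-1}(e^{hD})A(z)+F(z)$, where $z(t)=L_2(e^{hD})y(t)$. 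Applying the shift operator $e^{hD}$ to $\mathrm{e}^{\mathrm{i}k\phi(t)/\epsilon}\zeta^k(t)$ produces, by Taylor expansion of $\phi$ and $\zeta^k$, a factor $\mathrm{e}^{\mathrm{i}k\phi(t)/\epsilon}$ times a power series in $h$ with coefficients involving $\dot\phi,\ddot\phi,\ldots$ and $\zeta^k,\dot\zeta^k,\ldots$; the key point is that the dominant contribution is $\mathrm{e}^{\mathrm{i}k h\dot\phi/\epsilon}$, so that $L_1L_2^{-1}(e^{hD})$ acting on the $k$-th mode contributes at leading order the scalar $L_1L_2^{-1}(\mathrm{e}^{\mathrm{i}k h\dot\phi/\epsilon})$. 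Separating modes $k$ then yields, order by order in $\epsilon$, a hierarchy of equations for the $\zeta^k$.

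\textbf{The phase equation and the key resonance structure.}
For $k=0$ one reads off at leading order $\ddot\zeta^0\approx\dot\zeta^0\times B(\zeta^0)+F(\zeta^0)$ together with the constraint $\dot\zeta^0\times B(\zeta^0)=\mathcal{O}(\epsilon)$, which comes from the fact that the $\mathcal{O}(\epsilon^{-1})$ part of the $k=0$ equation must vanish. For $|k|=1$ the leading balance is between the term $\tfrac{1}{h^2}L_1^2L_2^{-2}(\mathrm{e}^{\mathrm{i}k h\dot\phi/\epsilon})\zeta^k$ and the magnetic term $\tfrac1h L_1L_2^{-1}(\mathrm{e}^{\mathrm{i}k h\dot\phi/\epsilon})\,\mathrm{i}(\text{something})\times B(\zeta^0)$; demanding a nontrivial solution $\zeta^k$ in the eigenspace of $\tilde B(\zeta^0)$ associated with the eigenvalue $\pm\mathrm{i}|B(\zeta^0)|$ forces a scalar relation between the symbols, which after using the explicit expressions $L_1(\zeta)=\zeta-1$, $L_2(\zeta)=(\zeta+1)/2$ and setting $\zeta=\mathrm{e}^{\mathrm{i}\eta\dot\phi}$ (with $\eta=h/\epsilon$) simplifies to $\tan(\tfrac12\eta\dot\phi)=\tfrac\eta2|B(\zeta^0)|$, which is exactly \eqref{phi}. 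I would verify this by a short computation: $\tfrac1h L_1L_2^{-1}(\mathrm e^{\mathrm i\eta\dot\phi}) = \tfrac1h\cdot\tfrac{\mathrm e^{\mathrm i\eta\dot\phi}-1}{(\mathrm e^{\mathrm i\eta\dot\phi}+1)/2}= \tfrac{2}{h}\cdot\mathrm i\tan(\tfrac12\eta\dot\phi)/1$, and similarly for the square, so the required eigenvalue equation becomes $\bigl(\tfrac{2}{h}\tan(\tfrac12\eta\dot\phi)\bigr)^2 = \tfrac{2}{h}\tan(\tfrac12\eta\dot\phi)\cdot\tfrac{|B(\zeta^0)|}{\epsilon}\cdot(\pm1)$ up to sign and $\mathcal{O}(\epsilon)$ corrections, i.e. $\tfrac{2}{h}\tan(\tfrac12\eta\dot\phi)=\tfrac{|B(\zeta^0)|}{\epsilon}$, which rearranges to \eqref{phi}. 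The condition $\tfrac{h}{\epsilon}|B(x_{n+1/2})|\le 2\tan\bigl(\tfrac{C}{2N+2}\bigr)$ with $C<\pi$ is precisely what guarantees $\tfrac12\eta\dot\phi$ stays in $(-\tfrac\pi2,\tfrac\pi2)$ so that $\dot\phi$ is well defined, and moreover ensures no resonances $k h\dot\phi\equiv 0$ occur among $|k|\le N+1$, so the symbols $L_1^2L_2^{-2}(\mathrm e^{\mathrm i k\eta\dot\phi})$ are invertible for $k\neq 0,\pm1$.

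\textbf{Bookkeeping, bounds, and the main obstacle.}
Having fixed $\phi$, the remaining $\zeta^k$ are determined recursively: for $k\ne 0,\pm1$ the symbol is invertible and $\zeta^k$ is obtained algebraically from lower-order data, giving $\zeta^k=\mathcal{O}(\epsilon^{|k|})$; for $k=\pm1$ one projects with $P_j(\zeta^0)$ onto eigenspaces, the component in the "resonant" eigenspace being governed by a first-order ODE (after dividing by the near-zero symbol, which contributes the extra power of $\epsilon$) and the non-resonant components $P_j(\zeta^0)\zeta^k$ being $\mathcal{O}(\epsilon^2)$ as claimed; for $k=0$ one gets the second-order ODE for $\zeta^0$. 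The size estimates $\zeta^k=\mathcal{O}(\epsilon^{|k|})$ and the analogous bounds on derivatives up to order $N$ are proved by induction on the order of the expansion, exactly as in \cite{Hairer2018}; uniqueness up to $\mathcal{O}(\epsilon^{N+2})$ follows since at each step the defining equations determine $\zeta^k$ uniquely modulo the truncation. Finally, the defect of the truncated modulated Fourier expansion when inserted into the scheme is $\mathcal{O}(\epsilon^{N+2})$ per step, and a Gronwall/Lady Windermere's fan argument over $\mathcal{O}(\epsilon^{-N}\cdot\epsilon^{-1})$... more carefully over $0\le nh\le T_\epsilon=\mathcal{O}(\epsilon)$ accumulates this to the stated $\mathcal{O}(t^2\epsilon^2)$ remainder. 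I expect the main obstacle to be the careful treatment of the state-dependent frequency: because $\dot\phi$ depends on $\zeta^0(t)$ through \eqref{phi}, the operators $e^{hD}$ acting on $\mathrm{e}^{\mathrm{i}k\phi/\epsilon}$ generate, besides the leading scalar symbol, a cascade of lower-order terms involving $\ddot\phi$ and higher derivatives of $\phi$ and of $\zeta^0$, and one must check that these corrections are consistently of the claimed order and do not destroy the invertibility of the non-resonant symbols; handling this bookkeeping rigorously — together with showing that the rational symbol $L_1L_2^{-1}$ of TSM2 enjoys the same non-resonance properties used for the polynomial symbol of VARM in \cite{Hairer2018} — is the technical heart of the proof, while everything else is a direct adaptation of the construction there.
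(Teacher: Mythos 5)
Your proposal follows essentially the same route as the paper: both insert the modulated Fourier ansatz into the operator form of TSM2, observe that the rational symbol satisfies $L_1L_2^{-1}(\mathrm{e}^{\mathrm{i}\eta\dot{\phi}})=2\mathrm{i}\tan(\tfrac12\eta\dot{\phi})$, derive the phase relation \eqref{phi} from the leading-order balance on the $k=\pm1$ modes (the paper phrases this as the condition $\pm\mathrm{i}c_0^{\pm1}\abs{B(\zeta^0)}=d_0^{\pm1}$ with explicitly computed coefficients $c_l^k,d_l^k$, which is your eigenvalue balance in different notation), and defer the recursive construction, bounds, and remainder estimates to the argument of \cite{Hairer2018}. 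This matches the paper's proof, which likewise presents only these modifications and refers to \cite{Hairer2018} for the rest.
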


\begin{proof} The   proof is analogous to the proof in Section 5 of
\cite{Hairer2018} but with some necessary  modifications. For
brevity we only present the main differences and for the details we
refer to \cite{Hairer2018}.

 As previous analysis, the second method \eqref{TSM2} has the following scheme
  \begin{equation*}%\label{TSM2 strong}
\frac{1}{h^2}L^2_1L_2^{-2}(e^{hD})z_n =\frac{1}{\epsilon
h}A'^{\intercal}(z_n)(L_1L_2^{-1}(e^{hD})z_n)- \frac{1}{\epsilon
h}L_1L_2^{-1}(e^{hD}) A(z_n)+F(z_n),
\end{equation*}
where $z_n=\frac{x_n+x_{n+1}}{2}.$
%Moreover,   because of $v\times
%\frac{1}{\epsilon} B(x)=\big(A'^{\intercal}(x)-A'(x))v$, it can also
%be written as
%\begin{equation}\label{TSM2-ano strong}
%\frac{1}{h^2}L^2_1L_2^{-2}(e^{hD})z_n =\frac{1}{\epsilon
%h}L_1L_2^{-1}(e^{hD})z_n \times B(z_n)+F(z_n)+\frac{1}{\epsilon h}
%\big(A'^{\intercal}(z_n)L_1L_2^{-1}(e^{hD})z_n-L_1L_2^{-1}(e^{hD})A(z_n)\big).
%\end{equation}
For the  solution $z_n$ we consider the following  modulated Fourier
expansion
\begin{equation*}%\label{MFE}
 z_n\approx \sum\limits_{k\in \ZZ} \mathrm{e}^{\mathrm{i}k \phi
 (t)/\epsilon}\zeta^k(t)=
 \sum\limits_{k\in \ZZ} \Upsilon^k(t),
\end{equation*}
where $\Upsilon^k(t)=\mathrm{e}^{\mathrm{i}k \phi
 (t)/\epsilon}\zeta^k(t)$, $t=t_{n+1/2}:=\frac{t_n+t_{n+1}}{2}$, and the  functions $\phi$ and $\zeta^k$ depend on the stepsize $h$ and  $\eta=h/\epsilon.$

For the operators $\frac{1}{h}L_1L_2^{-1}(e^{hD})$ and
$\frac{1}{h^2}L^2_1L_2^{-2}(e^{hD})$, with careful computations  it
is obtained that
\begin{equation*}
 \begin{array}[c]{ll}
  &\frac{1}{h}L_1L_2^{-1}(e^{hD})\Upsilon^k(t)=\mathrm{e}^{\mathrm{i}k \phi
 (t)/\epsilon}\sum\limits_{l\geq 0}\epsilon^{l-1}c_l^k
 \frac{\textmd{d}^l}{\textmd{d}t^l}\zeta^k(t),\\
 &\frac{1}{h^2}L^2_1L_2^{-2}(e^{hD})\Upsilon^k(t)=\mathrm{e}^{\mathrm{i}k \phi
 (t)/\epsilon}\sum\limits_{l\geq 0}\epsilon^{l-2}d_l^k
 \frac{\textmd{d}^l}{\textmd{d}t^l}\zeta^k(t),\\
\end{array}
\end{equation*}
where $c^0_{2j}=d^0_{0}=d^0_{2j+1}=0$ and $c^0_{2j+1}=\alpha_{2j+1}
\eta^{2j},\ d^0_{2j}=\beta_{2j} \eta^{2j-2}$. Here $\alpha$ and
$\beta$ are the   coefficients appearing in the following series
$$2\tanh(t/2)=\sum\limits_{j\geq 0}^{\infty}\alpha_j t^j,\ \ 4\tanh^2(t/2)=\sum\limits_{j\geq 0}^{\infty}\beta_j t^j.$$
The first coefficients for $k \neq 0$ are defined by
\begin{equation}\label{cd coeffi}
 \begin{array}[c]{ll}
  &c_0^k=\frac{\mathrm{i}}{\eta}2\tan\big(\frac{1}{2}k\eta \dot{\phi}\big)+\frac{1}{2}\epsilon k \eta
  \tan\big(\frac{1}{2}k\eta \dot{\phi}\big) \sec^2\big(\frac{1}{2}k\eta \dot{\phi}\big)\ddot{\phi}+\mathcal{O}(\epsilon),\\
  &c_1^k=  \sec^2\big(\frac{1}{2}k\eta \dot{\phi}\big)+\mathcal{O}(\epsilon),\\
    &d_0^k=\frac{-1}{\eta^2}4\tan^2\big(\frac{1}{2}k\eta \dot{\phi}\big)+\frac{\mathrm{i}\epsilon k\ddot{\phi}}{2}
    (2-\cos(k\eta \dot{\phi})) \sec^4\big(\frac{1}{2}k\eta \dot{\phi}\big)+\mathcal{O}(\epsilon),\\
    &d_1^k=\frac{2\mathrm{i}}{\eta}2\tan\big(\frac{1}{2}k\eta \dot{\phi}\big) \sec^2\big(\frac{1}{2}k\eta
    \dot{\phi}\big)+\mathcal{O}(\epsilon).
\end{array}
\end{equation}
It is noted that these coefficients depend on $\epsilon, \eta$ and
$t$ via derivatives of $\phi(t)$.

 For the second method, from the following required condition given in
  \cite{Hairer2018} $$\pm
\mathrm{i}c_0^{\pm1}\abs{B(\zeta^0)}=d_0^{\pm1},$$ the result
\eqref{phi} is obtained. The $e^{\pm1}_{\pm1}$ and $e^{j}_{k}$
presented in \cite{Hairer2018} are replaced by
\begin{equation*}
 \begin{array}[c]{ll}
&e^{\pm1}_{\pm1}=\pm\frac{2
\mathrm{i}}{\eta}\tan\big(\frac{1}{2}\eta \dot{\phi})
\sec^2\big(\frac{1}{2}\eta \dot{\phi}\big),\\
&e^{j}_{k}=-\frac{4 }{\eta^2}\tan\big(\frac{1}{2}k\eta
\dot{\phi})\big(\tan\big(\frac{1}{2}k\eta \dot{\phi})- j
\tan\big(\frac{1}{2}\eta \dot{\phi}\big)\big).
\end{array}
\end{equation*}

The rest parts of the proof are the same as those presented in
Section 5 of \cite{Hairer2018}. By the same arguments stated there,
the proof is complete.
\end{proof}
\subsection{Proof of Theorem \ref{nearly energy pre thm0s}}
\begin{prop}\label{H invariant thm}
Under the conditions of Proposition \ref{MFE thm}, there exists a
function $\mathcal{H}[\zeta]$ such that
\begin{equation*}
\begin{aligned}
&\mathcal{H}[\zeta](t)=\mathcal{H}[\zeta](0)+\mathcal{O}(t\epsilon^{N}) \ \ \textmd{for}\ 0\leq t\leq T_{\epsilon},\\
&\mathcal{H}[\zeta](t_{n+1/2})=H_h(x_{n+1/2},v_{n+1/2})+\mathcal{O}(\epsilon)
\ \ \textmd{for}\ nh\leq  T_{\epsilon},
\label{HH}%
\end{aligned}
\end{equation*}
where the constants  symbolised by $\mathcal{O}$ are independent of
$n,\ h$ and $\epsilon$, but depends on $N$.
\end{prop}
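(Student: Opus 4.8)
The plan is to follow the modulated Fourier expansion strategy of \cite{Hairer2018}: using the expansion of Proposition~\ref{MFE thm}, one first extracts an almost-invariant $\mathcal{H}[\zeta]$ of the modulation functions directly from the modulation equations, and then identifies it at the grid points with the modified energy $H_h$ by a leading-order comparison. Recall from the proof of Proposition~\ref{MFE thm} that inserting the ansatz $z_n\approx\sum_{\abs{k}\le N+1}\mathrm{e}^{\mathrm{i}k\phi(t)/\epsilon}\zeta^k(t)$ into the scheme
\[
\tfrac{1}{h^2}L^2_1L_2^{-2}(e^{hD})z_n=\tfrac{1}{\epsilon h}A'^{\intercal}(z_n)\big(L_1L_2^{-1}(e^{hD})z_n\big)-\tfrac{1}{\epsilon h}L_1L_2^{-1}(e^{hD})A(z_n)+F(z_n)
\]
and comparing the coefficients of $\mathrm{e}^{\mathrm{i}k\phi/\epsilon}$ produces modulation equations for the $\zeta^k$, satisfied up to a defect of size $\mathcal{O}(\epsilon^{N+1})$; in them the coefficients $c^k_l,d^k_l$ of \eqref{cd coeffi} enter only through derivatives of $\phi$, and $\phi$ is fixed by $\zeta^0$ through \eqref{phi}.

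\textbf{Construction of $\mathcal{H}[\zeta]$.} Since TSM2 is a variational integrator, the modulation system is — up to the defect — the Euler--Lagrange system of the action functional obtained by substituting the ansatz into the discrete action of TSM2. Following the treatment of the almost-invariants in \cite{Hairer2018}, one multiplies the $k$-th modulation equation by $\dot\zeta^{-k}$, sums over $\abs{k}\le N+1$, and uses: the identity $a^{\intercal}(a\times B)=0$; the fact that $\dot\zeta^{-k}\cdot\frac{\mathrm{d}^l}{\mathrm{d}t^l}\zeta^k$ is a total time derivative for even $l$; the partial-integration property \eqref{fact in Hairer}; and the bounds $\zeta^k=\mathcal{O}(\epsilon^{\abs{k}})$ together with $\dot\zeta^0\times B(\zeta^0)=\mathcal{O}(\epsilon)$ and $P_j(\zeta^0)\zeta^k=\mathcal{O}(\epsilon^2)$ from Proposition~\ref{MFE thm}. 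The resulting sum equals $\frac{\mathrm{d}}{\mathrm{d}t}\mathcal{H}[\zeta]+\mathcal{O}(\epsilon^{N})$ for an explicit $h$-independent function $\mathcal{H}[\zeta]$ of the modulation sequence (one power of $\epsilon$ being lost to the explicit $1/\epsilon$ in the scheme); integrating over $[0,t]$ with $0\le t\le T_\epsilon$ gives the first assertion $\mathcal{H}[\zeta](t)=\mathcal{H}[\zeta](0)+\mathcal{O}(t\epsilon^{N})$.

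\textbf{Identification with $H_h$ at the grid points.} By Proposition~\ref{MFE thm} and the dominance pattern $\zeta^0=\mathcal{O}(1)$, $\zeta^{\pm1}=\mathcal{O}(\epsilon)$, $\zeta^k=\mathcal{O}(\epsilon^{\abs{k}})$ for $\abs{k}\ge2$, one has $x_{n+1/2}=\zeta^0+\mathrm{e}^{\mathrm{i}\phi/\epsilon}\zeta^1+\mathrm{e}^{-\mathrm{i}\phi/\epsilon}\zeta^{-1}+\mathcal{O}(\epsilon^2)$; since $v_{n+1/2}=(x_{n+1}-x_n)/h=\frac{1}{h}L_1L_2^{-1}(e^{hD})z$ for TSM2, the same expansion yields $v_{n+1/2}=\dot\zeta^0+\epsilon^{-1}\big(c^1_0\mathrm{e}^{\mathrm{i}\phi/\epsilon}\zeta^1+c^{-1}_0\mathrm{e}^{-\mathrm{i}\phi/\epsilon}\zeta^{-1}\big)+\mathcal{O}(\epsilon)$, whose guiding-centre part $\dot\zeta^0$ is nearly parallel to $B(\zeta^0)$ (by $\dot\zeta^0\times B(\zeta^0)=\mathcal{O}(\epsilon)$) and whose cyclotron part is $\mathcal{O}(1)$ and, by the polarization of $\zeta^{\pm1}$ encoded in $P_j(\zeta^0)\zeta^k=\mathcal{O}(\epsilon^2)$, nearly orthogonal to $B(\zeta^0)$ with constant square-norm. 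Substituting into $H_h(x,v)=H(x,v)+(\xi\csc\xi-1)I(x,v)\abs{B(x)}$, the term $\tfrac12\abs{v}^2+U(x)=H(x,v)$ gives $\tfrac12\abs{\dot\zeta^0}^2+U(\zeta^0)$ plus the cyclotron energy $\tfrac12\abs{v_{n+1/2}^{\perp}}^2$ (the part of $v_{n+1/2}$ orthogonal to $B$), and a direct computation — using $\eta\dot\phi=\xi$ from \eqref{phi} and the explicit $c^{\pm1}_0,d^{\pm1}_0$ of \eqref{cd coeffi} — shows that the cyclotron part of $\mathcal{H}[\zeta]$ equals exactly $\xi\csc\xi$ times $\tfrac12\abs{v_{n+1/2}^{\perp}}^2$, so that $\tfrac12\abs{v_{n+1/2}^{\perp}}^2+(\xi\csc\xi-1)I\abs{B}$ matches the cyclotron part of $\mathcal{H}[\zeta]$ up to $\mathcal{O}(\epsilon)$; collecting terms yields $\mathcal{H}[\zeta](t_{n+1/2})=H_h(x_{n+1/2},v_{n+1/2})+\mathcal{O}(\epsilon)$.

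\textbf{Main difficulty.} The delicate step is the construction of $\mathcal{H}[\zeta]$: one must make the variational structure of the modulation system precise even though the phase $\phi$ is \emph{state-dependent} through \eqref{phi}, and track the $\epsilon$-powers carefully so that, despite the $1/\epsilon$ in the scheme and in the coefficients \eqref{cd coeffi}, the remainder in $\frac{\mathrm{d}}{\mathrm{d}t}\mathcal{H}[\zeta]$ is genuinely $\mathcal{O}(\epsilon^{N})$ and not $\mathcal{O}(\epsilon^{N-1})$ — exactly the point at which the auxiliary estimates $\dot\zeta^0\times B(\zeta^0)=\mathcal{O}(\epsilon)$ and $P_j(\zeta^0)\zeta^k=\mathcal{O}(\epsilon^2)$ of Proposition~\ref{MFE thm} are needed. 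The identification with $H_h$, by contrast, is a routine though somewhat lengthy Taylor expansion.
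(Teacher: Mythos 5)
Your overall strategy coincides with the paper's: write the modulation equations in the form \eqref{TSM2 strong}, extract an almost-invariant by multiplying with a time derivative and summing over $k$, and then identify its dominant part with $H_h$ at the grid points. But there is a genuine problem in the key step, namely your choice of multiplier. The paper multiplies the $k$-th modulation equation by $(\dot{\Upsilon}^{k})^{*}$ with $\Upsilon^k=\mathrm{e}^{\mathrm{i}k\phi/\epsilon}\zeta^k$, i.e.\ (after the phases cancel) by $\dot{\zeta}^{-k}-\frac{\mathrm{i}k\dot{\phi}}{\epsilon}\zeta^{-k}$, whereas you multiply by $\dot{\zeta}^{-k}$ alone. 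The dropped piece is not cosmetic. First, your multiplier equals $\dot{\Upsilon}^{-k}+\frac{\mathrm{i}k\dot{\phi}}{\epsilon}\Upsilon^{-k}$ up to the phase factor, so your summed expression is, up to signs, $\frac{\mathrm{d}}{\mathrm{d}t}\mathcal{H}[\zeta]$ plus $\frac{\dot{\phi}}{\epsilon}$ times the derivative-type combination that generates the \emph{action} almost-invariant $\mathcal{I}$; since $\dot{\phi}$ is state-dependent through \eqref{phi} and hence time-dependent, this combination is not a total derivative (an uncontrolled term of the form $\frac{\ddot{\phi}}{\epsilon}\,\mathcal{I}$ survives), so the first assertion does not follow as written. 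Second, even granting a total-derivative structure, the function you would obtain omits exactly the $\mathcal{O}(1)$ contributions coming from the phase-derivative part of the multiplier, namely the terms $\big(\frac{\mathrm{i}k\dot{\phi}}{\epsilon}\zeta^k\big)^{*}\frac{1}{\epsilon^2}d_0^k\zeta^k$ and $\big(\frac{\mathrm{i}k\dot{\phi}}{\epsilon}\zeta^k\big)^{*}\frac{1}{\epsilon}d_1^k\dot{\zeta}^k$ for $k=\pm1$; in the paper's computation these produce the $\frac{\abs{\zeta^1}^2}{\epsilon^2\eta^2}\tan(\tfrac12\eta\dot{\phi})(\cdots)$ part of $\mathcal{K}[\zeta]$ and all of $\mathcal{M}[\zeta]$, which together are precisely the $(\xi\csc\xi-1)I\abs{B}$ correction that distinguishes $H_h$ from $H$. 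With your multiplier the second identification statement cannot come out right.

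A secondary point: in the identification step you assert, rather than derive, that the cyclotron part of $\mathcal{H}[\zeta]$ equals $\xi\csc\xi$ times $\tfrac12\abs{v_{n+1/2}^{\perp}}^2$. That equality is the substance of the paper's second half: one must expand $v_{n+1/2}=\frac1h L_1L_2^{-1}(e^{hD})z$ to get $\vartheta^{\pm1}=\frac1\epsilon c_0^{\pm1}\zeta^{\pm1}+\mathcal{O}(\epsilon)$, express $\tfrac12\abs{v_{n+1/2}}^2$ and $I$ through $\abs{\zeta^1}^2$, and verify two nontrivial antiderivative identities for the $\tan/\sec$ expressions so that $\mathcal{K}$ and $\mathcal{M}$ admit closed forms; calling this ``a routine Taylor expansion'' understates where the work actually is, and as written the proposal does not establish it.
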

\begin{proof}
 $\bullet$ \textbf{Proof of the first statement.}  For $\abs{k} > N +
1$, it is assumed that $\Upsilon^k(t) = 0$. The equation \eqref{TSM2
strong} for the modulation functions can be written as
\begin{equation}\label{TSM2 strong}\begin{aligned}
\frac{1}{h^2}L^2_1L_2^{-2}(e^{hD})\Upsilon^k =&\frac{1}{\epsilon
}\Big( \sum\limits_{j\in \ZZ}\big(\frac{\partial
\mathcal{A}_j}{\partial \Upsilon^k} (\Upsilon)\big)^{*}
\frac{L_1L_2^{-1}(e^{hD})}{h}\Upsilon^j-\frac{L_1L_2^{-1}(e^{hD})}{h}
\mathcal{A}_k(\Upsilon) \Big)\\
&-\big(\frac{\partial \mathcal{U}}{\partial \Upsilon^k}
(\Upsilon)\big)^{*}+\mathcal{O}(\epsilon^N),
\end{aligned}\end{equation}
where $\mathcal{A}_j(\Upsilon)$ and $\mathcal{U}(\Upsilon)$ are
given by (see \cite{Hairer2018})
\begin{equation*}%\label{extended potential}
\begin{aligned}
&\mathcal{U}(\Upsilon)=\sum\limits_{0\leq m \leq N+1, s(\alpha)=0}
\frac{U^{(m)}}{m!}(\Upsilon^0) \Upsilon^{\alpha},\\
&\mathcal{A}(\Upsilon)=\Big(\sum\limits_{0\leq m \leq N+2,
s(\alpha)=k} \frac{A^{(m)}}{m!}(\Upsilon^0)
\Upsilon^{\alpha}\Big)_{k\in
\ZZ}=\Big(\mathcal{A}_k(\Upsilon)\Big)_{k\in \ZZ}.
\end{aligned}
\end{equation*}
Multiplication of \eqref{TSM2 strong} with
$(\dot{\Upsilon}^{k})^{*}$ and summation over $k$ yields
\begin{equation}\label{TSM2 strong-0}
\begin{aligned}
 \sum\limits_{k}& (\dot{\Upsilon}^{k})^{*}
\frac{1}{h^2}L^2_1L_2^{-2}(e^{hD})\Upsilon^k +\frac{1}{\epsilon
}\sum\limits_{k}\Big( \frac{\textmd{d} }{\textmd{d} t }
\mathcal{A}_k(\Upsilon)^{*} \frac{L_1L_2^{-1}(e^{hD})}{h}\Upsilon^k\\
&- (\dot{\Upsilon}^{k})^{*}\frac{L_1L_2^{-1}(e^{hD})}{h}
\mathcal{A}_k(\Upsilon) \Big)+ \frac{\textmd{d} }{\textmd{d} t
}\mathcal{U} (\Upsilon)=\mathcal{O}(\epsilon^N).
\end{aligned}
\end{equation}

By the expansion \eqref{Operator pro} of the operator
$\frac{1}{h^2}L^2_1L_2^{-2}$ and the analysis given in Theorem 5.1
of \cite{Hairer16}, we know that the first sum is a total
derivative. The second sum is a total differential by the proof  of
Theorem \ref{nearly energy pre thm02} given in this paper.
Therefore, the left-hand side   is a total derivative of function
$\mathcal{H}[\zeta]$ such that
$$\frac{\textmd{d} }{\textmd{d} t } \mathcal{H}[\zeta]=\mathcal{O}(\epsilon).$$
The first result of the theorem is shown.

 \vskip1mm $\bullet$ \textbf{Proof of the second statement.} In what follows, we prove the second statement of the theorem. To
this end, one needs to determine the dominant part of
$$\mathcal{H}[\zeta](t)=\mathcal{K}[\zeta](t)+\mathcal{M}[\zeta](t)+\mathcal{U}(\zeta(t)),$$
 where the time derivatives of  $\mathcal{K},\mathcal{M},\mathcal{U}$ equal the three corresponding terms on the left-hand
side of \eqref{TSM2 strong-0}.

Firstly, it is clear that
 $$\mathcal{U}[\zeta]=U(\zeta^0)+\mathcal{O}(\epsilon).$$

 For $\mathcal{K}[\zeta]$, we compute
\begin{equation*}%\label{extended potential}
\begin{aligned}
& \sum\limits_{k} (\dot{\Upsilon}^{k})^{*}
\frac{1}{h^2}L^2_1L_2^{-2}(e^{hD})\Upsilon^k=\sum\limits_{k}
\big(\dot{\zeta}^{k}+\frac{\mathrm{i}k
\dot{\phi}}{\epsilon}\zeta^k\big)^{*}\big(\sum\limits_{l\geq
0}\epsilon^{l-2}d_l^k
 \frac{\textmd{d}^l}{\textmd{d}t^l}\zeta^k(t)\big)\\
=&(\dot{\zeta}^0)^{*} \ddot{\zeta}^0+\sum\limits_{k=\pm1}\Big(
\big(\frac{\mathrm{i}k \dot{\phi}}{\epsilon}\zeta^k
\big)^{*}\frac{1}{\epsilon^2}d_0^k \zeta^k+
(\dot{\zeta}^{k})^{*}\frac{1}{\epsilon^2}d_0^k
\zeta^k+\big(\frac{\mathrm{i}k \dot{\phi}}{\epsilon}\zeta^k
\big)^{*}\frac{1}{\epsilon}d_1^k
\dot{\zeta}^k\Big)+\mathcal{O}(\epsilon)\\
=&(\dot{\zeta}^0)^{*}
\ddot{\zeta}^0+\frac{2\dot{\phi}\ddot{\phi}}{\epsilon^2}
 (2-\cos(\eta \dot{\phi})) \sec^4\big(\frac{1}{2}\eta
 \dot{\phi}\big)\abs{\zeta^1}^2\\
 &+\frac{4}{\epsilon^2\eta^2}\tan\big(\frac{1}{2}\eta \dot{\phi}\big)
\big(\eta \dot{\phi}\sec^2\big(\frac{1}{2}\eta
 \dot{\phi}\big)- \tan\big(\frac{1}{2}\eta \dot{\phi}\big)\big)
\big((\dot{\zeta}^1)^{*}
\zeta^1+(\zeta^1)^{*}\dot{\zeta}^1\big)+\mathcal{O}(\epsilon).
\end{aligned}
\end{equation*}
It can be checked that
$$\frac{2}{\eta^2}\frac{\textmd{d} }{\textmd{d} t }\tan\big(\frac{1}{2}\eta \dot{\phi}\big)
\big(\eta \dot{\phi}\sec^2\big(\frac{1}{2}\eta
 \dot{\phi}\big)-\tan \big(\frac{1}{2}\eta \dot{\phi}\big)\big)=\dot{\phi}\ddot{\phi}
 (2-\cos(\eta \dot{\phi})) \sec^4\big(\frac{1}{2}\eta
 \dot{\phi}\big).$$
Therefore, we have
$$\mathcal{K}[\zeta]=\frac{1}{2}\abs{\zeta^0}^2+\frac{4\abs{\zeta^1}^2}{\epsilon^2\eta^2}\tan\big(\frac{1}{2}\eta \dot{\phi}\big)
\big(\eta \dot{\phi}\sec^2\big(\frac{1}{2}\eta
 \dot{\phi}\big)- \big(\frac{1}{2}\eta \dot{\phi}\big)\big)+\mathcal{O}(\epsilon).$$

We then turn to $\mathcal{M}[\zeta]$. Since the term $k = 0 $  is of
size $\mathcal{O}(\epsilon)$,  the dominating terms appear for $k =
\pm1$, which read
\begin{equation*}%\label{TSM2 strong-1}
\begin{aligned}
&\frac{1}{\epsilon }\sum\limits_{k=\pm 1}\Big( \frac{\textmd{d}
}{\textmd{d} t } \mathcal{A}_k(\Upsilon)^{*}
\frac{L_1L_2^{-1}(e^{hD})}{h}\Upsilon^k-
(\dot{\Upsilon}^{k})^{*}\frac{L_1L_2^{-1}(e^{hD})}{h}\partial
\mathcal{A}_k(\Upsilon) \Big)\\
=&\frac{\mathrm{i}\dot{\phi}}{\epsilon^3}(c_0^1+c_0^{-1})\big((A'(\zeta^0)\zeta^1)^*\zeta^1-(\zeta^1)^*A'(\zeta^0)\zeta^1\big)\\
+&\frac{1}{\epsilon^2}(c_0^1-\mathrm{i}\dot{\phi}c_0^{-1})\big(
\frac{\textmd{d} }{\textmd{d} t }(A'(\zeta^0)\zeta^1)^*\zeta^1+
(A'(\zeta^0)\zeta^1)^*\dot{\zeta}^1\\
&-(\dot{\zeta}^1)^*A'(\zeta^0)\zeta^1 -(\zeta^1)^*\frac{\textmd{d}
}{\textmd{d} t }(A'(\zeta^0)\zeta^1)\big)+\mathcal{O}(\epsilon).
\end{aligned}
\end{equation*}
It follows from \cite{Hairer2018} that
$$(A'(\zeta^0)\zeta^1)^*\zeta^1-(\zeta^1)^*A'(\zeta^0)\zeta^1=\mathrm{i}\abs{B(\zeta^0)}\abs{\zeta^1}^2+\mathcal{O}(\epsilon).$$
Moreover, according to \eqref{cd coeffi}, we have
\begin{equation*}
\begin{aligned}
&c_0^1+c_0^{-1}=\epsilon \eta \tan\big(\frac{1}{2}\eta
 \dot{\phi}\big) \sec^2\big(\frac{1}{2}\eta
 \dot{\phi}\big)\ddot{\phi}+\mathcal{O}(\epsilon^2),\\
 &c_0^1-\mathrm{i}\dot{\phi}c_0^{-1}=\mathrm{i}\dot{\phi}\Big(\frac{\tan\big(\frac{1}{2}\eta
 \dot{\phi}\big)}{\frac{1}{2}\eta
 \dot{\phi}}-\sec^2\big(\frac{1}{2}\eta
 \dot{\phi}\big)\Big)+\mathcal{O}(\epsilon)
\end{aligned}
\end{equation*}
and it can be checked that the following relation holds
$$\frac{\textmd{d} }{\textmd{d} t }\dot{\phi}\Big(\frac{\tan\big(\frac{1}{2}\eta
 \dot{\phi}\big)}{\frac{1}{2}\eta
 \dot{\phi}}-\sec^2\big(\frac{1}{2}\eta
 \dot{\phi}\big)\Big)= \frac{\dot{\phi}}{\epsilon}(c_0^1+c_0^{-1}).$$
Based on these analysis,   the dominating terms  of
$\mathcal{M}[\zeta]$ are
$$\mathcal{M}[\zeta]=\Big(\frac{\tan\big(\frac{1}{2}\eta
 \dot{\phi}\big)}{\frac{1}{2}\eta
 \dot{\phi}}-\sec^2\big(\frac{1}{2}\eta
 \dot{\phi}\big)\Big)\dot{\phi}\abs{B(\zeta^0)}\frac{\abs{\zeta^1}^2}{\epsilon^2}+\mathcal{O}(\epsilon).$$

Therefore, $\mathcal{H}[\zeta]$ is obtained as follows
\begin{equation*}
\begin{aligned}
\mathcal{H}[\zeta](t)=&\frac{1}{2}\abs{\zeta^0}^2+\frac{4\abs{\zeta^1}^2}{\epsilon^2\eta^2}\tan\big(\frac{1}{2}\eta
\dot{\phi}\big) \big(\eta \dot{\phi}\sec^2\big(\frac{1}{2}\eta
 \dot{\phi}\big)- \tan\big(\frac{1}{2}\eta \dot{\phi}\big)\big)\\
 &+\Big(\frac{\tan\big(\frac{1}{2}\eta
 \dot{\phi}\big)}{\frac{1}{2}\eta
 \dot{\phi}}-\sec^2\big(\frac{1}{2}\eta
 \dot{\phi}\big)\Big)\dot{\phi}\abs{B(\zeta^0)}\frac{\abs{\zeta^1}^2}{\epsilon^2}+U(\zeta^0)+\mathcal{O}(\epsilon).
\end{aligned}
\end{equation*}

On the other hand, we  consider the dominant term in
$$E(x_{n+\frac{1}{2}},v_{n+\frac{1}{2}})=\frac{1}{2}\abs{v_{n+\frac{1}{2}}}^2+U(x_{n+\frac{1}{2}}).$$
Inserting the modulated Fourier expansion into $x_{n+\frac{1}{2}}$
yields  $x_{n+\frac{1}{2}} =  \zeta^0(t_{n+\frac{1}{2}} )
  +\mathcal{O}(\epsilon)$ and further $U(x_{n+\frac{1}{2}})=U(\zeta^0(t_{n+\frac{1}{2}})) +\mathcal{O}(\epsilon)$.
It is assumed that the modulated Fourier expansion for
$v_{n+\frac{1}{2}}$ is
\begin{equation*}%\label{MFE}
 v_{n+\frac{1}{2}} \approx \sum\limits_{k\in \ZZ} \mathrm{e}^{\mathrm{i}k \phi
 (t)/\epsilon}\vartheta^k(t).
\end{equation*}
From
$$v_{n+\frac{1}{2}}=\frac{L_1L_2^{-1}(e^{hD})}{h}x_{n+\frac{1}{2}}=\sum\limits_{k\in \ZZ}\frac{L_1L_2^{-1}(e^{hD})}{h} \mathrm{e}^{\mathrm{i}k \phi
 (t)/\epsilon}\zeta^k(t),$$
it follows that
\begin{equation*}
\begin{aligned}
&\vartheta^0=\dot{\zeta}^0+\mathcal{O}(\epsilon),\\
&\vartheta^1=\frac{1}{\epsilon}c_0^1\zeta^1+\mathcal{O}(\epsilon)=\frac{\mathrm{i}}{\epsilon
\eta}
2\tan\big(\frac{1}{2}\eta \dot{\phi}\big)\zeta^1+\mathcal{O}(\epsilon),\\
&\vartheta^{-1}=\frac{1}{\epsilon}c_0^{-1}\zeta^{-1}+\mathcal{O}(\epsilon)=-\frac{\mathrm{i}}{\epsilon
\eta} 2\tan\big(\frac{1}{2}\eta
\dot{\phi}\big)\zeta^{-1}+\mathcal{O}(\epsilon).
\end{aligned}
\end{equation*}
Thus, we have
$$v_{n+\frac{1}{2}}=\dot{\zeta}^0+\frac{\mathrm{i}}{h}2\tan\big(\frac{1}{2}\eta
\dot{\phi}\big)\big(\zeta_1^1\mathrm{e}^{\mathrm{i} \phi
 (t)/\epsilon}-\zeta_{-1}^{-1}\mathrm{e}^{-\mathrm{i} \phi
 (t)/\epsilon}\big)+\mathcal{O}(\epsilon),$$
which yields that \begin{equation*}
\begin{aligned}
\frac{1}{2}\abs{v_{n+\frac{1}{2}}}^2=\frac{1}{2}\abs{\dot{\zeta}^0}^2+\frac{4\tan^2\big(\frac{1}{2}\eta
\dot{\phi}\big)}{h^2}\abs{\zeta_1^1}^2
\end{aligned}
\end{equation*}
and
\begin{equation*}
\begin{aligned}
I(x_{n+\frac{1}{2}},v_{n+\frac{1}{2}})=\frac{4\tan^2\big(\frac{1}{2}\eta
\dot{\phi}\big)}{h^2}\frac{\abs{\zeta_1^1}^2}{\abs{B(\zeta^0)}}.
\end{aligned}
\end{equation*}
Therefore, we obtain
\begin{equation*}
\begin{aligned}E(x_{n+\frac{1}{2}},v_{n+\frac{1}{2}})&=\frac{1}{2}\abs{\dot{\zeta}^0}^2+
\frac{4\tan^2\big(\frac{1}{2}\eta \dot{\phi}\big)}{h^2}
\abs{\zeta_1^1}^2 +U(\zeta^0(t_{n+\frac{1}{2}}))
+\mathcal{O}(\epsilon)\\
&=\frac{1}{2}\abs{\dot{\zeta}^0}^2+\abs{B(\zeta^0)}I(x_{n+\frac{1}{2}},v_{n+\frac{1}{2}})
+U(\zeta^0(t_{n+\frac{1}{2}})) +\mathcal{O}(\epsilon).
\end{aligned}
\end{equation*}
By subtracting the expressions obtained for
$\mathcal{H}[\zeta](t_{n+\frac{1}{2}})$ and
$E(x_{n+\frac{1}{2}},v_{n+\frac{1}{2}})$ from each other, one gets
\begin{equation*}
\begin{aligned}\mathcal{H}[\zeta](t_{n+\frac{1}{2}})-E(x_{n+\frac{1}{2}},v_{n+\frac{1}{2}})=
(\eta \dot{\phi}\csc(\eta
\dot{\phi})-1)I(x_{n+\frac{1}{2}},v_{n+\frac{1}{2}})\abs{B(\zeta^0)}+\mathcal{O}(\epsilon),
\end{aligned}
\end{equation*}
where \eqref{phi} is used in the simplifications.  The second
statement is obtained immediately from this result.
\end{proof}

According to the above results and following the analysis presented
in \cite{Hairer2018} and  Chap. XIII of \cite{Hairer2006}, the
statement of Theorem \ref{nearly energy pre thm0s} is easily
obtained.

%%%%%%%%%%%%%%%%%%%%%%%%%%%%%%%%%%%%%%%%%%%%

\subsection{Proof of Theorem \ref{nearly energy pre thm02s}}
\begin{prop}\label{second invariant thm}
Under the conditions of Proposition \ref{MFE thm}, there exists a
function $\mathcal{I}[\zeta]$ such that
\begin{equation*}
\begin{aligned}
&\mathcal{I}[\zeta](t)=\mathcal{I}[\zeta](0)+\mathcal{O}(t\epsilon^{N}) \ \ \textmd{for}\ 0\leq t\leq T_{\epsilon},\\
&\mathcal{I}[\zeta](t_{n+1/2})=I_h(x_{n+1/2},v_{n+1/2})+\mathcal{O}(\epsilon)
\ \ \textmd{for}\ nh\leq  T_{\epsilon},
\label{HH}%
\end{aligned}
\end{equation*}
where the constants  symbolised by $\mathcal{O}$ are independent of
$n,\ h$ and $\epsilon$, but depends on $N$.
\end{prop}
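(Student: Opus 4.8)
The plan is to mimic the proof of Proposition \ref{H invariant thm}, replacing the use of time--translation invariance by the $U(1)$--invariance of the modulation system under the action $\Upsilon^k\mapsto e^{\mathrm{i}k\tau}\Upsilon^k$; this is the modulated Fourier analogue of Noether's theorem attached to the oscillatory degree of freedom, and it is the mechanism that produces the (modified) magnetic moment, exactly as for VARM in \cite{Hairer2018} and in Chap. XIII of \cite{Hairer2006}.

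\textbf{First statement.} Setting $\Upsilon^k\equiv0$ for $\abs{k}>N+1$, I would multiply the $k$--th equation in \eqref{TSM2 strong} by $-\mathrm{i}k(\Upsilon^k)^{*}$ and sum over $k$. Three contributions then have to be turned into total derivatives. (i) The inertial term $\sum_k(-\mathrm{i}k)(\Upsilon^k)^{*}\frac{1}{h^2}L^2_1L_2^{-2}(e^{hD})\Upsilon^k$ is a total derivative up to $\mathcal{O}(\epsilon^N)$, because $\frac{1}{h^2}L^2_1L_2^{-2}(e^{hD})$ has the real, even symbol recorded in \eqref{Operator pro}; this is the argument of Theorem 5.1 of \cite{Hairer16}, already invoked in the proof of Proposition \ref{H invariant thm}. (ii) The vector potential term $\frac{1}{\epsilon}\sum_k(-\mathrm{i}k)\big((\partial\mathcal{A}_j/\partial\Upsilon^k)^{*}\frac{L_1L_2^{-1}(e^{hD})}{h}\Upsilon^j-(\Upsilon^k)^{*}\frac{L_1L_2^{-1}(e^{hD})}{h}\mathcal{A}_k\big)$ is a total derivative up to $\mathcal{O}(\epsilon^N)$: differentiating at $\tau=0$ the homogeneity relation $\mathcal{A}_j((e^{\mathrm{i}k\tau}\Upsilon^k)_{k})=e^{\mathrm{i}j\tau}\mathcal{A}_j(\Upsilon)$, valid because only multi--indices with $s(\alpha)=k$ enter $\mathcal{A}_k$, gives $\sum_k\mathrm{i}k\,(\partial\mathcal{A}_j/\partial\Upsilon^k)\Upsilon^k=\mathrm{i}j\,\mathcal{A}_j$, and combining this with the partial integration fact \eqref{fact in Hairer}, used precisely as in the proof of Theorem \ref{nearly energy pre thm02}, the cross terms collapse to $\frac{\textmd{d}}{\textmd{d}t}(\cdot)+\mathcal{O}(\epsilon^N)$. (iii) The electric term $\sum_k(-\mathrm{i}k)(\Upsilon^k)^{*}(\partial\mathcal{U}/\partial\Upsilon^k)^{*}$ vanishes up to $\mathcal{O}(\epsilon^N)$ since only $s(\alpha)=0$ multi--indices enter $\mathcal{U}$, so $\mathcal{U}$ is $U(1)$--invariant and $\sum_k\mathrm{i}k\,(\partial\mathcal{U}/\partial\Upsilon^k)\Upsilon^k=0$. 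Collecting (i)--(iii), the left--hand side is $\frac{\textmd{d}}{\textmd{d}t}\mathcal{I}[\zeta]$ for a function $\mathcal{I}[\zeta]$ whose time derivative is $\mathcal{O}(\epsilon^N)$; as in \cite{Hairer2018} one has to take care that $\phi$ is state dependent, but this is handled there, and integrating over $0\le t\le T_\epsilon$ yields the first claim.

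\textbf{Second statement.} I would then read off the dominant part of $\mathcal{I}[\zeta]$ from its defining relation, keeping only the $k=\pm1$ contributions (the $k=0$ term is $\mathcal{O}(\epsilon)$ by $\dot{\zeta}^0\times B(\zeta^0)=\mathcal{O}(\epsilon)$, and $\zeta^k=\mathcal{O}(\epsilon^{\abs{k}})$ kills $\abs{k}\ge2$). Using the coefficients \eqref{cd coeffi}, the replacements for $e^{\pm1}_{\pm1},e^{j}_{k}$ from the proof of Proposition \ref{MFE thm}, the identity $(A'(\zeta^0)\zeta^1)^{*}\zeta^1-(\zeta^1)^{*}A'(\zeta^0)\zeta^1=\mathrm{i}\abs{B(\zeta^0)}\abs{\zeta^1}^2+\mathcal{O}(\epsilon)$ of \cite{Hairer2018}, and the same ``$\frac{\textmd{d}}{\textmd{d}t}$--primitive'' relations that were used for $\mathcal{K}[\zeta]$ and $\mathcal{M}[\zeta]$ in Proposition \ref{H invariant thm}, the surviving terms combine into $\mathcal{I}[\zeta]=\big(1+\tan^2(\frac{1}{2}\eta\dot{\phi})\big)\frac{4\tan^2(\frac{1}{2}\eta\dot{\phi})}{h^2}\frac{\abs{\zeta_1^1}^2}{\abs{B(\zeta^0)}}+\mathcal{O}(\epsilon)$. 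On the other hand, the proof of Proposition \ref{H invariant thm} already gives $I(x_{n+1/2},v_{n+1/2})=\frac{4\tan^2(\frac{1}{2}\eta\dot{\phi})}{h^2}\frac{\abs{\zeta_1^1}^2}{\abs{B(\zeta^0)}}+\mathcal{O}(\epsilon)$, while \eqref{phi} says $\tan(\frac{1}{2}\eta\dot{\phi})=\frac{\eta}{2}\abs{B(\zeta^0)}$, i.e. $\frac{1}{2}\eta\dot{\phi}=\xi(\zeta^0)/2$ and $1+\tan^2(\frac{1}{2}\eta\dot{\phi})=\sec^2(\xi/2)=\frac{h^2}{4\epsilon^2}\abs{B(\zeta^0)}^2+1$. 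Therefore $\mathcal{I}[\zeta](t_{n+1/2})=\sec^2(\xi/2)\,I(x_{n+1/2},v_{n+1/2})+\mathcal{O}(\epsilon)=I_h(x_{n+1/2},v_{n+1/2})+\mathcal{O}(\epsilon)$, which is the second claim; the passage from this proposition to Theorem \ref{nearly energy pre thm02s} is then the standard argument of \cite{Hairer2018} and Chap. XIII of \cite{Hairer2006}.

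\textbf{Where the difficulty sits.} The only genuinely delicate step is (ii): showing that the magnetic part of the multiplied and summed equation is a total derivative and pinning down its $\mathcal{O}(\epsilon^{-2})$ part. This forces one to combine the $U(1)$--homogeneity of $\mathcal{A}$ with the partial integration lemma \eqref{fact in Hairer} while bookkeeping the $\epsilon$--powers hidden inside the coefficients $c_l^k,d_l^k$ of \eqref{cd coeffi} (which depend on $\epsilon,\eta$ and the derivatives of the state--dependent phase $\phi$), and then checking that the leading surviving term is exactly the $\sec^2$--factor above rather than some other function of $\eta\dot{\phi}$. Everything else is a line--by--line repetition of the proof of Proposition \ref{H invariant thm} and of Section 5 of \cite{Hairer2018}.
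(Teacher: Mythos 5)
Your proposal follows essentially the same route as the paper: multiply the modulation equations \eqref{TSM2 strong} by $-\mathrm{i}k(\Upsilon^k)^{*}$, sum over $k$, use the $U(1)$-invariance of $\mathcal{U}$ and $\mathcal{A}$ together with the partial-integration and ``magic formula'' arguments to exhibit the real part as a total derivative, and then extract the dominant $k=\pm1$ contribution, which via $\tan(\tfrac12\eta\dot\phi)=\tfrac{\eta}{2}\abs{B(\zeta^0)}$ gives exactly $\sec^2(\xi/2)I=I_h$ up to $\mathcal{O}(\epsilon)$. The only cosmetic difference is that the paper observes that the vector-potential sum is itself $\mathcal{O}(\epsilon)$ for $k=\pm1$, so the dominant part comes solely from the inertial sum (computed via Lemma 5.1 of \cite{Hairer16}), whereas you fold the $A'$-identity into the bookkeeping; the resulting leading expression is the same.
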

\begin{proof}
  $\bullet$ \textbf{Proof of the first statement.} Multiplication of \eqref{TSM2 strong} with $-\ii
k(\Upsilon^{k})^{*}$ and summation over $k$ yields
\begin{equation}\label{TSM2 strong-02}
\begin{aligned}
 -\sum\limits_{k} \ii k(\Upsilon^{k})^{*}
\frac{1}{h^2}L^2_1L_2^{-2}(e^{hD})\Upsilon^k +\frac{1}{\epsilon
}\sum\limits_{k}\ii k\Big( &\mathcal{A}_k(\Upsilon)^{*}
\frac{L_1L_2^{-1}(e^{hD})}{h}\Upsilon^k\\
&- (\Upsilon^{k})^{*}\frac{L_1L_2^{-1}(e^{hD})}{h}
\mathcal{A}_k(\Upsilon) \Big)=\mathcal{O}(\epsilon),
\end{aligned}
\end{equation}
where the results (4.43) and (4.44) of \cite{Hairer2018} are used
here. Similarly to the analysis of previous section, it can be shown
that the real part of this left-hand size  is a total derivative.
Therefore, there exists a function $\mathcal{I}[\zeta]$ such that
$\frac{\textmd{d} }{\textmd{d} t
}\mathcal{I}[\zeta]=\mathcal{O}(\epsilon).$ This proves the first
statement of the theorem.

 \vskip1mm $\bullet$ \textbf{Proof of the second statement.}  Concerning the dominant terms of $\mathcal{I}[\zeta]$, the left-hand
size of \eqref{TSM2 strong-02} is zero for $k=0$. For $k=\pm1$, we
can verify that the second sum is of size $\mathcal{O}(\epsilon)$.
Thus, the dominant part of $\mathcal{I}[\zeta]$ only exist in the
first sum for $k=\pm1$. With this and  the ``magic formulas" on p.
508 of \cite{Hairer2006}, we deduce that
\begin{equation}\label{ld1}
\begin{aligned}
& \mathrm{Re}(\textmd{the\ first\ sum\  of\  \eqref{TSM2 strong-02}}) \\
%=&\frac{\mathrm{i}}{2\epsilon} \sum\limits_{|k|\leq
%N+1}k\mathrm{Im}\Big[\big(\bar{q}_{h} \big)^\intercal
%\cos(\frac{1}{2}h\Omega)(-h^2\bar{b}_1(h\Omega))^{-1}
% 4\sin^2(\frac{1}{2}h\Omega)q_{h}  \\
% &+ \big(\bar{q}_{h} \big)^\intercal
%\cos(\frac{1}{2}h\Omega)(-h^2\bar{b}_1(h\Omega))^{-1}
%  \sum\limits_{l\geq0}\frac{2h^{2l+2}}{(2l+2)!}
% q^{(2l+2)}_{h} \Big)\Big]\\
=&-\ii
\sum\limits_{k}k\frac{d}{dt}\sum\limits_{l\geq0}\beta_{2l}h^{2l}\mathrm{Im}\Big[\big(
\Upsilon^k \big)^*(\Upsilon^k)^{(2l+1)}
                                               -\big(\dot{\Upsilon^k}
                                               \big)^*
                                                (\Upsilon^k)^{(2l)}
+\cdots\pm  \big((\Upsilon^k)^{(l)} \big)^*
                                               (\Upsilon^k)^{(l+1)}  \Big].
\end{aligned}
\end{equation}
 For $k \neq 0$, from Lemma 5.1 given in \cite{Hairer16}, it follows
 that
\begin{equation*}\begin{aligned}
\frac{1}{m!}\frac{d^m}{dt^m}\Upsilon^k(t)=\frac{1}{m!}\zeta^k(t)
\big(\frac{\mathrm{i}k}{\epsilon}\dot{\phi}(t)\big)^m
\mathrm{e}^{\mathrm{i}k
\phi(t)/\epsilon}+\mathcal{O}\Big(\frac{1}{(m/M)!}\big(\frac{c}{\epsilon}\big)^{m-1-|k|}
\Big),
%\label{boun-der}%
\end{aligned}
\end{equation*}
where $c$ and the constant symbolised by $\mathcal{O}$ are
independent of $m  \geq 1$ and  $\epsilon$.
 By inserting this into $
(-1)^r\frac{d^r}{dt^r}\big( \Upsilon^k(t)\big)^{*} \frac{d^s}{dt^s}
\Upsilon^k(t)$,  it can be seen that the dominant term is to be the
same whenever $r+s=2l+ 1$. Thus, it is clear that
\begin{equation*}
\begin{aligned}
&\Big[\big( \Upsilon^k \big)^*(\Upsilon^k)^{(2l+1)}
                                               -\big(\bar{\Upsilon^k}
                                               \big)^*
                                                (\Upsilon^k)^{(2l)}
+\cdots\pm  \big((\Upsilon^k)^{(l)} \big)^*
                                               (\Upsilon^k)^{(l+1)}  \Big]\\
=&(l+1)\big(\frac{\mathrm{i}k}{\epsilon}\dot{\phi} \big)^{2l+1}
 (\bar{\zeta}^k )^\intercal  \zeta^k
+\mathcal{O}\Big(\frac{1}{(l/M)!}\big(\frac{c}{\epsilon}\big)^{2l-2|k|}
\Big).
\end{aligned}
\end{equation*}
This implies  that the total derivative of \eqref{ld1} is given by
\begin{equation*}
\begin{aligned}
&\frac{-\mathrm{i}}{\epsilon} \sum\limits_{k}\frac{\mathrm{i}k}{h}
\sum\limits_{l\geq0}\Big[(-1)^l\beta_{2l} (l+1)\big(\frac{
kh}{\epsilon}\dot{\phi} \big)^{2l+1}
 (\bar{\zeta}^k )^\intercal  \zeta^k \Big]+\mathcal{O}(\epsilon)\\
=&\frac{1}{\epsilon h} \sum\limits_{k}\frac{k}{2}
\sum\limits_{l\geq0}\Big[(-1)^l\beta_{2l} (2l+2)\big(\frac{
kh}{\epsilon}\dot{\phi} \big)^{2l+1}
 (\bar{\zeta}^k )^\intercal  \zeta^k \Big]+\mathcal{O}(\epsilon)\\
=&  \frac{1}{\epsilon h} \sum\limits_{k}\frac{k}{2} 2
\tan\big(\frac{1}{2}\eta k
 \dot{\phi}\big) \sec^2\big(\frac{1}{2}\eta k
 \dot{\phi}\big)\abs{\zeta^k}^2 +\mathcal{O}(\epsilon)\\
 =&  \frac{2}{\epsilon h} \frac{1}{2} 2
\tan\big(\frac{1}{2}\eta
 \dot{\phi}\big) \sec^2\big(\frac{1}{2}\eta
 \dot{\phi}\big)\abs{\zeta^1}^2 +\mathcal{O}(\epsilon)\\
=&  \frac{\abs{\zeta^1}^2}{\epsilon^2}
\frac{\abs{B(\zeta^0)}}{\cos^2\big(\frac{1}{2}\eta
 \dot{\phi}\big)} +\mathcal{O}(\epsilon)=
\frac{1}{\cos^2\big(\frac{1}{2}\eta
 \dot{\phi}\big)}I(x_{n+\frac{1}{2}},v_{n+\frac{1}{2}})
 +\mathcal{O}(\epsilon).
\end{aligned}
\end{equation*}
Therefore, we obtain that
$$\mathcal{I}[\zeta](t_{n+\frac{1}{2}})=
\frac{1}{\cos^2\big(\frac{1}{2}\eta
 \dot{\phi}\big)}I(x_{n+\frac{1}{2}},v_{n+\frac{1}{2}})
 +\mathcal{O}(\epsilon)$$
and this completes the proof.

\end{proof}

By using the analysis presented in \cite{Hairer2018} and  Chap. XIII
of \cite{Hairer2006} again, Theorem \ref{nearly energy pre thm02s}
is proved.

\section{Concluding remarks} \label{sec:conclusions}
The study of long-term behaviour for an integrator when applied to
charged-particle dynamics is of great importance which have been
received a great attention. In this paper, we first presented two
two-step symmetric methods for charged-particle dynamics and then
analysed their long time behaviour not only in a normal magnetic
field but also in a strong magnetic field. Backward error analysis
and modulated Fourier expansion were used to prove the results.  By
the analysis of this paper, the long time conservations of    each
method were shown clearly for two cases of magnetic fields. In
compared with the Boris method and the variational method researched
recently in \cite{Hairer2018}, these two methods show some
superiorities in the conservations over long times.

\section*{Acknowledgement}
The first author is grateful to Professor Christian Lubich for the
discussions and helpful comments on the long term analysis of
methods for charged-particle dynamics.

\end{document}